\newtheorem{example}[theorem]{Example}
\let\oldexample\example
\renewcommand{\example}{\oldexample\normalfont}
\newtheorem{remark}[theorem]{Remark}
\let\oldremark\remark
\renewcommand{\remark}{\oldremark\normalfont}
\title{A comparison of eigenvalue condition numbers for matrix polynomials\thanks{This work was partially supported by the Ministerio de Econom\'ia, Industria y Competitividad (MINECO) of Spain
	through grants MTM2015-65798-P and MTM2017-90682-REDT. The research of	L. M. Anguas is funded by the ``contrato predoctoral'' BES-2013-065688 of MINECO.}}
\author{Luis Miguel  Anguas\thanks{Departamento de Matem\'aticas, Universidad Carlos III de Madrid,
		Avda.\ Universidad 30, 28911 Legan\'es, Spain ({\tt languas@math.uc3m.es}).} \and Mar\'ia Isabel Bueno\thanks{Department of Mathematics and College of Creative Studies,
University of California, Santa Barbara, CA 93106, USA ({\tt mbueno@math.ucsb.edu}).}, \and Froil\'an M.  Dopico\thanks{Departamento de Matem\'aticas, Universidad Carlos III de Madrid,
Avda.\ Universidad 30, 28911 Legan\'es, Spain ({\tt dopico@math.uc3m.es}).} }
\begin{document}

\maketitle

\begin{abstract}
In this paper, we consider the different eigenvalue condition numbers for matrix polynomials used in the literature and we compare them. One of these condition numbers is a generalization of the Wilkinson condition number for the standard eigenvalue problem. This number has the disadvantage of only being defined for finite eigenvalues. In order to give a unified approach to all the eigenvalues of a matrix polynomial, both finite and infinite, two (homogeneous) condition numbers have been defined in the literature. In their definition, very different approaches are used.  One of the main goals of this note is to show that, when the matrix polynomial has a moderate degree, both homogeneous numbers are essentially the same and one of them provides a geometric interpretation of the other. We also show how the homogeneous condition numbers compare with the ``Wilkinson-like'' eigenvalue condition number and how they extend this condition number to zero and infinite eigenvalues.
\end{abstract}

\begin{keywords}
Eigenvalue condition number, matrix polynomial, chordal distance, eigenvalue.
\end{keywords}

\begin{AMS}
15A18, 15A22, 65F15, 65F35
\end{AMS}

\pagestyle{myheadings}
\thispagestyle{plain}
\markboth{L. M. Anguas, M.I. Bueno, and F.M. Dopico}{A comparison of eigenvalue condition numbers for matrix polynomials}

\section{Introduction}\label{sectintro}
Let $\mathbb{C}$ denote the field of complex numbers. A square matrix polynomial of grade $k$ can be expressed (in its non-homogeneous form) as
\begin{equation}\label{pol}
P(\lambda)= \sum_{i=0}^k  \lambda^i B_i , \quad B_i\in \mathbb{C}^{n\times n},
\end{equation}
where the matrix coefficients, including $B_k$, are allowed to be the zero matrix. In particular, when $B_k \neq 0$, we say that $P(\lambda)$ has degree $k$. Throughout the paper, the grade of every matrix polynomial $P(\lambda)$ will be assumed to be its degree unless it is specified otherwise.

The (non-homogeneous) polynomial eigenvalue problem (PEP) associated with a regular matrix polynomial $P(\lambda)$, (that is, $det(P(\lambda))\neq0$) consists of finding scalars $\lambda_0\in \mathbb{C}$ and nonzero vectors {$x,y\in\mathbb{C}^n$} satisfying
$$P(\lambda_0)x=0 \ \text{and}\ y^*P(\lambda_0)=0.$$
The vectors $x$ and $y$ are called, respectively, a right and a left eigenvector of $P(\lambda)$ corresponding to the eigenvalue $\lambda_0$. In addition, $P(\lambda)$ may have infinite eigenvalues. We say that $P(\lambda)$ has an infinite eigenvalue if 0 is an eigenvalue of the reversal of $P(\lambda)$, where the reversal of a matrix polynomial $P(\lambda)$ of grade $k$ is defined as
\begin{equation}
\label{reversal}
rev(P(\lambda)):=\lambda^k P(1/\lambda) \, .
\end{equation}

The numerical solution of the PEP has received considerable attention from many research groups in the last two decades and, as a consequence, several condition numbers for simple eigenvalues of a matrix polynomial $P(\lambda)$ have been defined in the literature to determine the sensitivity of  these eigenvalues to perturbations in the coefficients of $P(\lambda)$  \cite{Ded-first, DedTis2003,Tis2000}. One of these condition numbers is a natural generalization of the Wilkinson condition number for the standard eigenproblem (see Definition \ref{rel-cond}). A disadvantage of this eigenvalue condition number is that it is not defined for infinite eigenvalues. Then, in order to study the conditioning of all the eigenvalues of a matrix polynomial, other condition numbers are considered in the literature. These condition numbers assume that the matrix polynomial is expressed in homogeneous form, that is,
 \begin{equation}
 \label{homform}
 P(\alpha,\beta)=\sum_{i=0}^k\alpha^i\beta^{k-i}B_i.
\end{equation}

If $P(\alpha, \beta)$ is regular, we can consider the corresponding homogeneous PEP that consists in finding pairs of scalars $(\alpha_0,\beta_0)\neq(0,0)$ and nonzero vectors $x,y\in\mathbb{C}^n$ such that
\begin{equation}\label{hom-eig}
P(\alpha_0,\beta_0)x=0 \quad \ \text{and}\ \quad y^*P(\alpha_0,\beta_0)=0.
\end{equation}
We note that the  pairs $(\alpha_0, \beta_0)$ satisfying  (\ref{hom-eig}) are those for which $det(P(\alpha_0,\beta_0))=0$ holds. Notice that $(\alpha_0,\beta_0)$ satisfies $det(P(\alpha_0,\beta_0))=0$ if and only if $det(P(c\alpha_0,c\beta_0))=0$ for any nonzero complex number $c$. Therefore, it is natural to define an eigenvalue of $P(\alpha,\beta)$ as any line in $\mathbb{C}^2$ passing through the origin consisting of solutions of $det(P(\alpha,\beta))=0$. For simplicity, we denote such a line, i.e., an eigenvalue of $P(\alpha,\beta)$, as $(\alpha_0,\beta_0)$ and by $[\alpha_0,\beta_0]\neq (0,0)$ a specific representative of this eigenvalue. The vectors $x,y$ in (\ref{hom-eig}) are called, respectively, a right and a left eigenvector of $P(\alpha,\beta)$ corresponding to the eigenvalue $(\alpha_0,\beta_0)$. For $\beta\neq 0$, we can define $\lambda =\alpha/\beta$ and find a relationship between the homogeneous and the non-homogeneous expressions of a matrix polynomial $P$ of degree $k$  as follows:
$$P(\alpha, \beta) = \beta^kP(\lambda).$$

We note also that, if  $(y, \lambda_0,x)$ is a solution of the non-homogeneous PEP, i.e., an eigentriple of the non-homogeneous PEP, then $(y, (\alpha_0,\beta_0),x)$ is a solution of the corresponding homogeneous PEP, for any ${[\alpha_0,\beta_0]\neq (0,0)}$ such that $\lambda_0=\alpha_0/\beta_0$, including $\lambda_0=\infty$ for $\beta_0=0$.

Two homogeneous  eigenvalue condition numbers (well defined  for all the eigenvalues of  $P(\alpha,\beta)$, finite and infinite) have been presented in the literature. One of them is a natural generalization of the condition number defined by Stewart and Sun in \cite[Chapter VI, Section 2.1]{Stewart} for the eigenvalues of a pencil. This condition number is defined in terms of the chordal distance between two lines in $\mathbb{C}^2$ (see Definition \ref{cord-cond}).

The  other homogeneous eigenvalue condition number is defined as the norm of a differential map \cite{Ded-first, DedTis2003}. The  definition of this condition number is very involved and less intuitive than the definition of the other condition numbers that we consider in this paper. For an explicit formula for this condition number, see Theorem \ref{teorhomcondnumb}.

In this paper we address the following natural questions:

\begin{itemize}
\item how are the two homogeneous eigenvalue condition numbers related? Are they equivalent?
\item if $\lambda_0$ is a finite nonzero eigenvalue of a matrix polynomial $P(\lambda)$ and $(\alpha_0, \beta_0)$ is the associated eigenvalue of $P(\alpha, \beta)$ (that is, $\lambda_0=\alpha_0/\beta_0$), how are the (non-homogeneous) absolute and relative eigenvalue condition numbers of $\lambda_0$ and the (homogeneous) condition numbers of $(\alpha_0, \beta_0)$ related? Are these two types of condition numbers equivalent in the sense that $\lambda_0$ is ill-conditioned if and only if $(\alpha_0, \beta_0)$ is ill-conditioned?
\end{itemize}

Partial answers to these questions are scattered in the literature written in an implicit way so that they seem to be unnoticed by most researchers in Linear Algebra. Our goal is to present a complete and explicit answer to these questions. More precisely, we provide an exact relationship between the two homogeneous eigenvalue condition numbers and we use this relationship to prove that they are equivalent. Also, we obtain exact relationships between each of the non-homogeneous (relative and absolute) and the homogeneous eigenvalue condition numbers. From these relationships we prove that the non-homogeneous condition numbers are always larger than the homogeneous condition numbers. This means that non-homogenous eigenvalues $\lambda_0$ are always more sensitive to perturbations than the corresponding homogeneous ones $(\alpha_0,\beta_0)$, which is natural since $\lambda_0 = \alpha_0/\beta_0$. Moreover, we will see that non-homogeneous eigenvalues with large or small moduli have much larger non-homogeneous than homogeneous condition numbers. Thus, in these cases, $(\alpha_0,\beta_0)$ can be very well-conditioned and $\lambda_0$ very ill-conditioned. In the context of this discussion, it is important to bear in mind that in most applications of PEPs the quantities of interest are the non-homogeneous eigenvalues, and not the homogeneous ones.

The paper is organized as follows: Section \ref{sec2} includes the definitions and expressions of the different condition numbers that are used in this work. In section \ref{seccomp}, we establish relationships between the condition numbers introduced in section \ref{sec2}, and in section \ref{secgeom} we present a geometric interpretation of these relationships. Section \ref{secgeom} also includes a study of the computability of small and large eigenvalues. Finally, some conclusions  are discussed in section \ref{secfinal}.

\section{Eigenvalue condition numbers of matrix polynomials}\label{sec2}
In this section we recall three  eigenvalue condition numbers used in the literature and discuss some of the advantages and disadvantages of each of them. Before recalling their definition, we present some notation that will be used throughout the paper.

Let $a$ and $b$ be two integers. We define
$$a:b = \left \{ \begin{array}{cc} a, a+1, a+2, \ldots, b, & \textrm{if $a\leq b$,} \\ \emptyset, & \textrm{if $a >b$.}\end{array} \right. $$
For any matrix $A$, $\|A\|_2$ denotes its spectral or 2-norm, i.e., its largest singular value \cite{Stewart}. For any vector $x$, $\|x\|_2$ denotes its standard Euclidean norm, i.e., $\|x\|_2=(x^*x)^{1/2}$, where the operator $()^*$ stands for the conjugate-transpose of $x$.

\subsection{Non-homogeneous eigenvalue condition numbers}

Next we recall the definition of  two versions (absolute and relative) of a normwise eigenvalue condition number introduced in \cite{Tis2000}.
\begin{definition}\label{rel-cond}
Let  $\lambda_0$ be a simple, finite eigenvalue of a regular matrix polynomial $P(\lambda)=\sum_{i=0}^k \lambda^i B_i$ of grade $k$  and let $x$ be a right  eigenvector of $P(\lambda)$ associated with $\lambda_0$.  We define the \emph{normwise absolute  condition number} $\kappa_a(\lambda_0,P)$  of $\lambda_0$ by
\begin{align*}
\kappa_{a}(\lambda_0,P) := \lim_{\epsilon\to0} \sup\bigg\{  \frac{|\Delta \lambda_0
|}{\epsilon} : [P(\lambda_0+ \Delta \lambda_0) + \Delta P    &
(\lambda_0+ \Delta \lambda_0)](x + \Delta x) =0,\\
&  \|\Delta B_{i} \|_{2} \leq\epsilon\; \omega_{i}, i=0:k \bigg\},
\end{align*}
where  $\Delta P(\lambda)=\sum_{i=0}^k \lambda^i \Delta B_i$ and $\omega_i$, $i=0:k$, are nonnegative weights that allow flexibility in how the perturbations of $P(\lambda)$ are measured.

For $\lambda_0 \neq 0$,  we define the \emph{normwise relative condition number} $\kappa_r(\lambda_0,P)$  of $\lambda_0$ by
\begin{align*}
\kappa_{r}(\lambda_0,P) := \lim_{\epsilon\to0} \sup\bigg\{\frac{|\Delta \lambda_0
|}{\epsilon|\lambda_0|} : [P(\lambda_0+ \Delta \lambda_0) + \Delta P    &
(\lambda_0+ \Delta \lambda_0)](x + \Delta x) =0,\\
&    \|\Delta B_{i} \|_{2} \leq\epsilon\; \omega_{i}, i=0:k \bigg\}.
\end{align*}

\end{definition}

We will refer to $\kappa_a(\lambda_0, P)$ and $\kappa_r(\lambda_0, P)$, respectively,  as the \emph{absolute and relative  non-homogeneous eigenvalue condition numbers.} Note that the absolute non-homogeneous condition number is not defined for infinite eigenvalues while the relative condition number is not defined for zero or infinite eigenvalues.

\begin{remark}\label{weights}
In the definitions of $\kappa_a(\lambda_0, P)$ and $\kappa_r(\lambda_0, P)$,  the weights $\omega_i$ can be chosen in different ways. The most common ways are: 1) $\omega_i=\|B_{i}\|_2$ (relative coefficient-wise perturbations); 2) $\omega_i=\max \limits_{ i=0: k}\{\|B_i\|_2\}$ (relative perturbations with respect to the norm of $P(\lambda)$); 3) $\omega_i = 1$ (absolute perturbations).
\end{remark}

An explicit formula for each of the non-homogeneous eigenvalue  condition numbers presented above was obtained by Tisseur in \cite{Tis2000}.

\begin{theorem}
	\label{teorTis}
Let $P(\lambda)$ be a regular matrix polynomial of grade $k$. Let $\lambda_0$ be a simple, finite eigenvalue of $P(\lambda)$ and let $x$ and $y$ be, respectively, a right and a left eigenvector of $P(\lambda)$ associated with $\lambda_0$. Then, 
	$$\kappa_a(\lambda_0,P)=\frac{(\sum_{i=0}^k |\lambda_0|^i \omega_i)\|y\|_2\|x\|_2}{|y^*P'(\lambda_0)x|},$$
	where $P'(\lambda)$ denotes the derivative of $P(\lambda)$ with respect to $\lambda$. For $\lambda_0 \neq 0$,
	$$ \kappa_r(\lambda_0,P)=\frac{(\sum_{i=0}^k |\lambda_0|^i \omega_i)\|y\|_2\|x\|_2}{|\lambda_0||y^*P'(\lambda_0)x|}.$$
\end{theorem}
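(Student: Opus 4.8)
The plan is to carry out a standard first-order perturbation analysis of the simple eigenvalue $\lambda_0$, following the lines of the original argument of Tisseur. Fix a perturbation $\Delta P(\lambda)=\sum_{i=0}^k\lambda^i\Delta B_i$ with $\|\Delta B_i\|_2\le\epsilon\,\omega_i$. Since $\lambda_0$ is a \emph{simple} eigenvalue, the classical analytic perturbation theory for regular matrix polynomials guarantees that, for $\epsilon$ small enough, $P+\Delta P$ has a unique eigenvalue $\lambda_0+\Delta\lambda_0$ near $\lambda_0$ and an associated right eigenvector $x+\Delta x$, with $|\Delta\lambda_0|,\|\Delta x\|_2=O(\epsilon)$ as $\epsilon\to 0$. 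Substituting $P(\lambda_0+\Delta\lambda_0)=P(\lambda_0)+\Delta\lambda_0\,P'(\lambda_0)+O(|\Delta\lambda_0|^2)$ and $\Delta P(\lambda_0+\Delta\lambda_0)=\Delta P(\lambda_0)+O(\epsilon|\Delta\lambda_0|)$ into $[P(\lambda_0+\Delta\lambda_0)+\Delta P(\lambda_0+\Delta\lambda_0)](x+\Delta x)=0$, expanding, and using $P(\lambda_0)x=0$, all terms that are not linear in $\epsilon$ collapse into a single $O(\epsilon^2)$ remainder, leaving
\[
P(\lambda_0)\,\Delta x+\Delta\lambda_0\,P'(\lambda_0)\,x+\Delta P(\lambda_0)\,x=O(\epsilon^2).
\]
Left-multiplying by $y^*$ and using $y^*P(\lambda_0)=0$ eliminates the first term, and since $\lambda_0$ is simple we have $y^*P'(\lambda_0)x\neq 0$, so
\[
\Delta\lambda_0=-\frac{y^*\Delta P(\lambda_0)\,x}{y^*P'(\lambda_0)\,x}+O(\epsilon^2).
\]

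For the upper bound, I would estimate the numerator: since $\Delta P(\lambda_0)=\sum_{i=0}^k\lambda_0^i\Delta B_i$ and $|y^*Ax|\le\|y\|_2\|A\|_2\|x\|_2$, the triangle inequality gives $|y^*\Delta P(\lambda_0)x|\le\sum_{i=0}^k|\lambda_0|^i\|\Delta B_i\|_2\|y\|_2\|x\|_2\le\epsilon\big(\sum_{i=0}^k|\lambda_0|^i\omega_i\big)\|y\|_2\|x\|_2$. Dividing by $\epsilon$, taking the supremum over admissible $\Delta B_i$, and letting $\epsilon\to 0$ so that the $O(\epsilon^2)$ term drops out yields $\kappa_a(\lambda_0,P)\le(\sum_{i=0}^k|\lambda_0|^i\omega_i)\|y\|_2\|x\|_2\,/\,|y^*P'(\lambda_0)x|$.

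For the matching lower bound I would construct an explicit perturbation attaining equality to first order. Set $\Delta B_i=\epsilon\,\omega_i\,e^{\mathrm{i}\phi_i}\,yx^*/(\|y\|_2\|x\|_2)$; then $\|\Delta B_i\|_2=\epsilon\,\omega_i$ (the only nonzero singular value of a rank-one matrix $uv^*$ is $\|u\|_2\|v\|_2$), so this $\Delta B_i$ is admissible, while $y^*\Delta B_i x=\epsilon\,\omega_i\,e^{\mathrm{i}\phi_i}\|y\|_2\|x\|_2$. Choosing each phase $\phi_i$ so that $\lambda_0^i\,e^{\mathrm{i}\phi_i}$ is real and nonnegative (possible for every $i$, and trivial for the indices $i\ge 1$ when $\lambda_0=0$) forces $\lambda_0^i\,e^{\mathrm{i}\phi_i}=|\lambda_0|^i$, whence $y^*\Delta P(\lambda_0)x=\epsilon\big(\sum_{i=0}^k|\lambda_0|^i\omega_i\big)\|y\|_2\|x\|_2$, so that $|\Delta\lambda_0|/\epsilon$ equals the claimed value up to an $O(\epsilon)$ error; letting $\epsilon\to 0$ shows that the supremum is achieved and the bound is tight. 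The formula for $\kappa_r(\lambda_0,P)$ then follows at once by dividing by $|\lambda_0|$, which is legitimate since $\lambda_0\neq 0$ in that case.

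The delicate step — and the one I would single out as the main obstacle — is making the first-order expansion rigorous: one must justify the analytic dependence of the simple eigenvalue and of a suitably normalized eigenvector on the coefficients $B_i$, confirm that the discarded terms are genuinely $O(\epsilon^2)$ \emph{uniformly} over the admissible class of perturbations (so that the limiting supremum and the first-order estimate coincide), and check that the explicitly constructed $\Delta B_i$ really do give rise to an admissible perturbed eigentriple. Once this analytic groundwork is in place, the remaining algebra — the triangle-inequality bound and the phase-alignment construction — is routine.
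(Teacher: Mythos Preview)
Your proof is correct and is precisely the standard first-order perturbation argument due to Tisseur; note, however, that the paper does not prove this theorem at all but merely quotes it from \cite{Tis2000}, so there is no ``paper's own proof'' to compare against. Your expansion, upper bound via the triangle inequality, and rank-one phase-aligned construction are exactly the ingredients of the original proof, and your identification of the analytic-dependence step as the only delicate point is accurate.
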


The following technical result will be useful for the comparison of the non-homogeneous condition numbers introduced above and the homogeneous condition numbers that we introduce in the next subsection. We will use the concept of reversal of a matrix polynomial defined in \eqref{reversal}.

\begin{lemma}\label{kappa-a-r}
Let $P(\lambda)=\sum_{i=0}^k \lambda^i B_i$ be a regular matrix polynomial. Let $\lambda_0$ be a simple, nonzero, finite eigenvalue of $P(\lambda)$. Then,
$$\kappa_a\left (\frac{1}{\lambda_0}, rev P \right) =\frac{ \kappa_r(\lambda_0,  P )}{|\lambda_0|} \quad \textrm{and} \quad \kappa_r \left ( \frac{1}{\lambda_0}, rev P \right) = \kappa_r( \lambda_0, P). $$
\end{lemma}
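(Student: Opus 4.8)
The plan is to apply the explicit formulas from Theorem \ref{teorTis} to both $P(\lambda)$ and $rev P(\lambda)$, and then relate the eigentriples of the two polynomials. First I would recall that if $P(\lambda) = \sum_{i=0}^k \lambda^i B_i$ then $rev P(\lambda) = \sum_{i=0}^k \lambda^i B_{k-i}$, so its coefficient of $\lambda^i$ is $B_{k-i}$. A key preliminary observation is that $\lambda_0 \neq 0$ is a simple eigenvalue of $P$ with right eigenvector $x$ and left eigenvector $y$ if and only if $\mu_0 = 1/\lambda_0$ is a simple eigenvalue of $rev P$ with the same right and left eigenvectors $x$ and $y$; this follows from $rev P(1/\lambda_0) = \lambda_0^{-k} P(\lambda_0)$ (for $\lambda_0\ne 0$) together with the fact that the reversal is an involution on grade-$k$ polynomials, so simplicity is preserved.

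Next I would sort out the weights. Since the $i$-th coefficient of $rev P$ is $B_{k-i}$, the natural weight associated with it in the definition of $\kappa_a(1/\lambda_0, rev P)$ is $\omega_{k-i}$; I would denote by $\widetilde\omega_i := \omega_{k-i}$ the weights for $rev P$, so that $\sum_{i=0}^k |\mu_0|^i \widetilde\omega_i = \sum_{i=0}^k |1/\lambda_0|^i \omega_{k-i} = \sum_{j=0}^k |\lambda_0|^{j-k}\omega_j = |\lambda_0|^{-k}\sum_{j=0}^k |\lambda_0|^j \omega_j$, reindexing $j = k-i$. This is the one genuinely computational step, but it is just a reindexing of a finite sum.

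Then I would handle the derivative term. Writing $Q(\lambda) := rev P(\lambda) = \lambda^k P(1/\lambda)$, differentiation gives $Q'(\lambda) = k\lambda^{k-1} P(1/\lambda) - \lambda^{k-2} P'(1/\lambda)$; evaluating at $\lambda = 1/\lambda_0$ and using $P(\lambda_0) x = 0$ (hence $y^* P(\lambda_0) = 0$ as well, and the first term dies when sandwiched between $y^*$ and $x$), I get $y^* Q'(1/\lambda_0) x = -\lambda_0^{2-k} \, y^* P'(\lambda_0) x$. Plugging this, together with the weight sum computed above and the eigenvectors $x,y$ (whose norms are unchanged), into Theorem \ref{teorTis} yields
$$\kappa_a\!\left(\tfrac{1}{\lambda_0}, rev P\right) = \frac{|\lambda_0|^{-k}\left(\sum_{i=0}^k |\lambda_0|^i\omega_i\right)\|y\|_2\|x\|_2}{|\lambda_0|^{2-k}\,|y^*P'(\lambda_0)x|} = \frac{\left(\sum_{i=0}^k |\lambda_0|^i\omega_i\right)\|y\|_2\|x\|_2}{|\lambda_0|^{2}\,|y^*P'(\lambda_0)x|} = \frac{\kappa_r(\lambda_0,P)}{|\lambda_0|},$$
where the last equality is just the formula for $\kappa_r(\lambda_0,P)$ in Theorem \ref{teorTis} divided by an extra $|\lambda_0|$. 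The second identity then follows immediately: $\kappa_r(1/\lambda_0, rev P) = |1/\lambda_0|^{-1}\,\kappa_a(1/\lambda_0, rev P) = |\lambda_0|\cdot \kappa_a(1/\lambda_0, rev P) = \kappa_r(\lambda_0, P)$, using the relation $\kappa_r = \kappa_a/|\cdot|$ that is evident from Definition \ref{rel-cond} or from the two formulas in Theorem \ref{teorTis}.

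The only point requiring care — the "main obstacle", though it is minor — is the bookkeeping in two places at once: the grade-$k$ scaling factors $\lambda_0^{\pm k}$ coming from $rev P(1/\lambda_0) = \lambda_0^{-k}P(\lambda_0)$ and from differentiating $\lambda^k P(1/\lambda)$, and the simultaneous reindexing of the weights $\omega_i \mapsto \omega_{k-i}$. One must check that the $|\lambda_0|^k$ factors from the numerator and the $|\lambda_0|^{k-2}$ factor from the denominator combine to leave exactly $|\lambda_0|^{2}$ in the denominator, which is what converts $\kappa_a(\cdot, rev P)$ into $\kappa_r(\lambda_0,P)/|\lambda_0|$ rather than $\kappa_a(\lambda_0,P)/|\lambda_0|$ or some other variant. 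Everything else is a direct substitution into the closed-form expressions already available.
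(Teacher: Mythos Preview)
Your proof is correct and follows essentially the same route as the paper's: apply the explicit formulas of Theorem \ref{teorTis} to $rev P$ at $1/\lambda_0$, reindex the weight sum, differentiate $\lambda^k P(1/\lambda)$ and use $P(\lambda_0)x=0$ to relate $y^*(rev P)'(1/\lambda_0)x$ to $y^*P'(\lambda_0)x$, then read off the factor $|\lambda_0|^{-2}$ and deduce the second identity from the first. The only cosmetic difference is that you display the chain-rule computation for $Q'(\lambda)$ explicitly, whereas the paper condenses that step into a single equality.
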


\begin{proof}
We only prove the first claim. The second claim follows immediately from the first.

Let $x$ and $y$ be, respectively, a right and a left eigenvector of $P(\lambda)$ associated with $\lambda_0$. It is easy to see that these vectors are also a right and a left eigenvector of $rev P$ associated with $\frac{1}{\lambda_0}$. Notice that
\begin{align*}
\kappa_a\left (\frac{1}{\lambda_0}, rev P \right) & = \frac{(\sum_{i=0}^k \left | \frac{1}{\lambda_0}\right |^{k-i} \omega_i )\|x\|_2 \|y\|_2}{|y^*( rev P)' (\frac{1}{\lambda_0})x|}\\
&= \frac{(\sum_{i=0}^k \left | {\lambda_0}\right |^{i} \omega_i )\|x\|_2 \|y\|_2}{|\lambda_0|^k|y^*( rev P)' (\frac{1}{\lambda_0})x|}
=\frac{(\sum_{i=0}^k \left | {\lambda_0}\right |^{i} \omega_i )\|x\|_2 \|y\|_2}{|\lambda_0|^2|y^*\lambda_0^{k-2}( rev P)' (\frac{1}{\lambda_0})x|}\\
&=\frac{(\sum_{i=0}^k \left | {\lambda_0}\right |^{i} \omega_i )\|x\|_2 \|y\|_2}{|\lambda_0|^2|y^*P'(\lambda_0)x|}=\frac{\kappa_r(\lambda_0, P)}{|\lambda_0|},
\end{align*}
where the fourth equality follows from the facts that $revP(\lambda)= \lambda^k P(\frac{1}{\lambda}) $ and $P(\lambda_0)x=0$, and the first and fifth equalities follow from Theorem \ref{teorTis}. Thus, the claim follows.

\end{proof}

\subsection{Homogeneous eigenvalue condition numbers}

As pointed out in the last subsection, neither of the non-homogeneous condition numbers  is defined for infinite eigenvalues. Thus, these type of eigenvalues require a special treatment in the non-homogeneous setting. In this section we introduce two condition numbers that allow a unified approach to all eigenvalues, finite and infinite.  These condition numbers require  the matrix polynomial to be expressed  in homogeneous form (see \eqref{homform}). This is the reason why we refer to them as homogeneous eigenvalue condition numbers.

\begin{remark}\label{eig-P}
We recall that $(\alpha_0, \beta_0)\neq (0,0)$ is an eigenvalue of $P(\alpha, \beta)$ if and only if $\lambda_0:=\alpha_0/\beta_0$ is an eigenvalue of $P(\lambda)$, where $\lambda_0 =\infty$ if $\beta_0=0$.
\end{remark}

Each of the condition numbers presented in this subsection has been defined in the literature with a different approach. One of them is due to Stewart and Sun \cite{Stewart}, who define the eigenvalue condition number  in terms of the chordal distance between the exact and the perturbed eigenvalues. The other approach is due  to Dedieu and Tisseur \cite{Ded-first, DedTis2003} and makes use of the Implicit Function Theorem to construct a differential operator whose norm is defined to be an eigenvalue condition number. This idea was inspired  by Shub and Smale's work \cite{Shub}.  These two condition numbers  do not have a specific name in the literature.  We will refer to them as the Stewart-Sun  condition number and the  Dedieu-Tisseur condition number, respectively.

\subsubsection{Dedieu-Tisseur condition number}

The homogeneous eigenvalue condition number that we present in this section has been often used in recent literature on matrix polynomials as an alternative to the non-homogeneous Wilkinson-like condition number. See, for instance, \cite{hamarling, HigMacTis}. We do not include its explicit definition because it is much more involved than Definition \ref{rel-cond}. For the interested reader, the definition can be found in \cite{DedTis2003}. The next theorem provides an explicit formula for this condition number.

\begin{theorem}
	\label{teorhomcondnumb}{\rm\cite[Theorem 4.2]{DedTis2003}}
	Let $(\alpha_0,\beta_0)$ be a simple eigenvalue of the regular matrix polynomial $P(\alpha,\beta)=\sum_{i=0}^k\alpha^i\beta^{k-i}B_i$, and let  $y$ and $x$ be, respectively, a left and a right eigenvector of $P(\alpha, \beta)$  associated with $(\alpha_0, \beta_0)$. Then, the Dedieu-Tisseur condition number of $(\alpha_0,\beta_0)$ is given by
\begin{equation}\label{hom-form}
\kappa_{h}((\alpha_0,\beta_0),P)=\left (\sum_{i=0}^k|\alpha_0|^{2i}|\beta_0|^{2(k-i)} \omega_i^2 \right)^{1/2} \frac{\|y\|_2\|x\|_2}{|y^*(\overline{\beta_0}D_{\alpha} P(\alpha_0, \beta_0)-\overline{\alpha_0}D_{\beta} P(\alpha_0, \beta_0))x|},
\end{equation}
where $D_{a} \equiv \frac{\partial}{\partial a},$ that is, the partial derivative with respect to $a$ and $\omega_i$, $i=0:k$ are nonnegative weights that define how the perturbations of the coefficients $B_i$ are measured.
\end{theorem}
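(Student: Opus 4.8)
The plan is to argue directly from the definition of $\kappa_h$ in \cite{DedTis2003}: it is the norm of the derivative, evaluated at the zero perturbation, of the map that sends an admissible perturbation $\Delta P(\alpha,\beta)=\sum_{i=0}^k\alpha^i\beta^{k-i}\Delta B_i$ to the perturbed eigenvalue, the latter being measured, as in \cite{DedTis2003}, by the distance in $\mathbb{P}(\mathbb{C}^2)$ between the base line $(\alpha_0,\beta_0)$ and the perturbed line, and the perturbation space being normed by $\big(\sum_{i=0}^k\|\Delta B_i\|_2^2/\omega_i^2\big)^{1/2}$. Since $(\alpha_0,\beta_0)$ is simple, the Implicit Function Theorem yields, along the perturbation path $t\mapsto t\,\Delta P$, smooth curves $(\alpha(t),\beta(t))$ and $x(t)$ with $(\alpha(0),\beta(0))=(\alpha_0,\beta_0)$, $x(0)=x$, and $\big(P+t\,\Delta P\big)(\alpha(t),\beta(t))\,x(t)=0$ for small $t$.

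First I would differentiate this identity at $t=0$ and left-multiply by $y^*$; because $y^*P(\alpha_0,\beta_0)=0$, the term involving $\dot x(0)$ vanishes and one is left with the scalar equation
\[
 y^*\Delta P(\alpha_0,\beta_0)x+\dot\alpha\, y^*D_\alpha P(\alpha_0,\beta_0)x+\dot\beta\, y^*D_\beta P(\alpha_0,\beta_0)x=0,
\]
where $(\dot\alpha,\dot\beta)$ is the velocity of the perturbed eigenvalue. Normalizing the representative so that $|\alpha_0|^2+|\beta_0|^2=1$ (homogeneity makes the right-hand side of \eqref{hom-form} independent of the representative, so nothing is lost), I would split $(\dot\alpha,\dot\beta)=t\,(\alpha_0,\beta_0)+s\,(-\overline{\beta_0},\overline{\alpha_0})$ into the component tangent to the eigenvalue line, which does not move the point of $\mathbb{P}(\mathbb{C}^2)$, and the transversal component of magnitude $|s|$, which is exactly the infinitesimal displacement of the eigenvalue. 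Substituting and using Euler's identity for homogeneous functions, $\alpha D_\alpha P(\alpha,\beta)+\beta D_\beta P(\alpha,\beta)=kP(\alpha,\beta)$, the coefficient of $t$ becomes $k\,y^*P(\alpha_0,\beta_0)x=0$, so the equation reduces to
\[
 s=\frac{y^*\Delta P(\alpha_0,\beta_0)x}{y^*\big(\overline{\beta_0}D_\alpha P(\alpha_0,\beta_0)-\overline{\alpha_0}D_\beta P(\alpha_0,\beta_0)\big)x}.
\]
Simplicity of $(\alpha_0,\beta_0)$ is precisely what makes this denominator nonzero, which in turn justifies the application of the Implicit Function Theorem above.

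It then remains to evaluate $\kappa_h=\sup_{\Delta P}|s|/\big(\sum_i\|\Delta B_i\|_2^2/\omega_i^2\big)^{1/2}$, that is, to maximize $|y^*\Delta P(\alpha_0,\beta_0)x|=\big|\sum_{i=0}^k\alpha_0^i\beta_0^{k-i}\,y^*\Delta B_i\,x\big|$ over all $\Delta B_i$ with $\sum_i\|\Delta B_i\|_2^2/\omega_i^2\le 1$. Bounding $|y^*\Delta B_i x|\le\|y\|_2\|x\|_2\|\Delta B_i\|_2$ coefficient by coefficient and then applying the Cauchy--Schwarz inequality to the sum gives the upper bound $\|y\|_2\|x\|_2\big(\sum_{i=0}^k|\alpha_0|^{2i}|\beta_0|^{2(k-i)}\omega_i^2\big)^{1/2}$; it is attained by choosing each $\Delta B_i$ to be a scalar multiple of $yx^*/(\|y\|_2\|x\|_2)$ with the scalars proportional to $\overline{\alpha_0^i\beta_0^{k-i}}\,\omega_i^2$, which saturates both inequalities simultaneously. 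Inserting this supremum into the expression for $s$ gives exactly \eqref{hom-form}.

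The step I expect to be the main obstacle is the interface with the definition in \cite{DedTis2003}: making the metric on $\mathbb{P}(\mathbb{C}^2)$ and the norm on the space of perturbations explicit enough that the ``norm of the differential map'' is literally the supremum $\sup|s|/\big(\sum_i\|\Delta B_i\|_2^2/\omega_i^2\big)^{1/2}$, and verifying that the tangential/transversal decomposition of $(\dot\alpha,\dot\beta)$ correctly isolates the part of the velocity that is seen as a change of the eigenvalue. After that identification, the differentiation through the Implicit Function Theorem, the use of Euler's identity, and the norm optimization are all routine.
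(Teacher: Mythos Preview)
The paper does not prove this theorem at all: it is stated with an explicit citation to \cite[Theorem 4.2]{DedTis2003} and used as a black box. So there is no ``paper's own proof'' to compare against; your proposal is in effect a reconstruction of the Dedieu--Tisseur argument.

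That said, your sketch is correct and follows the standard route for this result. The differentiation of $(P+t\,\Delta P)(\alpha(t),\beta(t))x(t)=0$ at $t=0$ followed by left multiplication by $y^*$, the tangential/transversal splitting of $(\dot\alpha,\dot\beta)$ along $(\alpha_0,\beta_0)$ and $(-\overline{\beta_0},\overline{\alpha_0})$, and the use of Euler's identity to annihilate the tangential contribution are exactly the ingredients of the original proof. Your norm optimization is also right: the Cauchy--Schwarz step and the rank-one maximizers $\Delta B_i\propto \overline{\alpha_0^i\beta_0^{k-i}}\,\omega_i^2\, yx^*/(\|y\|_2\|x\|_2)$ saturate both the operator-norm bound $|y^*\Delta B_i x|\le\|y\|_2\|x\|_2\|\Delta B_i\|_2$ and the Cauchy--Schwarz inequality simultaneously. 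The one caveat you yourself flag---pinning down that the Dedieu--Tisseur norm on perturbations is $\big(\sum_i\|\Delta B_i\|_2^2/\omega_i^2\big)^{1/2}$ and that their metric on $\mathbb{P}(\mathbb{C}^2)$ makes $|s|$ the infinitesimal displacement---is genuinely the only place where care is needed, and it is a matter of unpacking definitions from \cite{DedTis2003} rather than a mathematical obstacle.
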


It is important to note that the expression for this eigenvalue condition number does not depend on the choice of representative for the eigenvalue  $(\alpha_0, \beta_0)$.

\subsubsection{Stewart-Sun  condition number}
Here we introduce another homogeneous eigenvalue condition number.
Its definition is easy to convey and to interpret from a geometrical point of view.

We recall that every eigenvalue of a homogeneous matrix polynomial can be seen as a line in $\mathbb{C}^2$ passing through the origin. The condition number that we present here  uses the  ``chordal distance'' between lines in $\mathbb{C}^2$ to measure the distance between an eigenvalue and a perturbed eigenvalue. This distance is defined on the projective space $\mathbb{P}_1(\mathbb{C})$.

Before introducing the chordal distance, we recall the definition of angle between two lines.

\begin{definition}\label{cond-chord} Let $x$ and $y$ be two nonzero vectors in $\mathbb{C}^2$ and let $\langle x \rangle$ and $\langle y \rangle$ denote the lines passing through zero in the direction of $x$ and $y$, respectively. We define the \emph {angle between the two lines} $\langle x \rangle$ and $\langle y \rangle$   by
$$\theta(\langle x \rangle, \langle y \rangle) := arc cos \frac{|\langle x, y \rangle |}{\|x\|_2\|y\|_2}, \quad 0\leq\theta(\langle x \rangle, \langle y \rangle)\leq\pi/2,$$
where $\langle x, y \rangle$ denotes the standard Hermitian inner product, i.e., $\langle x,y\rangle=y^*x$.
\end{definition}

\begin{remark}\label{sintheta}
We note that $cos\; \theta(\langle x\rangle, \langle y \rangle)$ can be seen as the ratio between the length of the orthogonal projection (with respect to the standard inner product in $\mathbb{C}^2$) of the vector $x$ onto $y$ to the length of the vector $x$ itself, that is,
$$cos\; \theta(\langle x\rangle,\langle y\rangle ) = \frac{\|proj_y x\|_2}{\|x\|_2},$$
 since
$$proj_y x = \frac{\langle x, y \rangle y}{\|y\|_2^2} \quad \textrm{and} \quad
\frac{\|proj_y x\|_2}{\|x\|_2} = \frac{|\langle x, y \rangle|}{\|x\|_2 \|y\|_2}. $$
We also have
$$sin\; \theta(\langle x\rangle,\langle y\rangle )= \frac{\|x - proj_y x \|_2}{\|x\|_2} . $$
\end{remark}

The definition of chordal distance is given next.
\begin{definition}{\rm\cite[Chapter VI, Definition 1.20]{Stewart}}
	\label{def-chord}
Let $x$ and $y$ be two nonzero vectors in $\mathbb{C}^2$ and let $\langle x \rangle$ and $\langle y \rangle$ denote the lines passing through zero in the direction of $x$ and $y$, respectively.
The chordal distance between $\langle x \rangle$ and $\langle y \rangle$ is given by $$\chi(\langle x\rangle, \langle y \rangle):= sin(\theta(\langle x \rangle, \langle y \rangle)).$$
\end{definition}

Notice that
$\chi(\langle x \rangle, \langle y \rangle)\leq \theta(\langle x \rangle, \langle y \rangle).$ 
Moreover, the chordal distance and the angle are identical asymptotically, that is, when $\theta(\langle x \rangle, \langle y \rangle)$ approaches $0$.

The chordal distance between two lines $\langle x \rangle$ and $\langle y \rangle $ in $\mathbb{C}^2$ can also be expressed in terms of the coordinates of the vectors $x$ and $y$ in the canonical basis for $\mathbb{C}^2$. Note that this expression does not depend on the representatives $x$ and $y$ of the lines.
\begin{lemma}\label{ds-formula}{\rm\cite[page 283]{Stewart}}
If $\langle \alpha, \beta \rangle$ and $\langle \gamma, \delta \rangle $ are two lines in $\mathbb{C}^2$, then
$$\chi(\langle \alpha, \beta \rangle, \langle \gamma, \delta \rangle)= \frac{|\alpha\delta - \beta \gamma|}{\sqrt{|\alpha|^2+|\beta|^2}\sqrt{|\gamma|^2+|\delta|^2}}.$$
\end{lemma}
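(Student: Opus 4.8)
The plan is to reduce everything to the definitions already recorded in the excerpt and then carry out one short algebraic identity. By Definition \ref{def-chord}, $\chi(\langle \alpha,\beta\rangle, \langle\gamma,\delta\rangle) = \sin\theta(\langle\alpha,\beta\rangle,\langle\gamma,\delta\rangle)$, so it suffices to compute $\sin\theta$ in terms of coordinates. Setting $x=(\alpha,\beta)^T$ and $y=(\gamma,\delta)^T$, I would use the formula from Remark \ref{sintheta}, namely $\sin\theta(\langle x\rangle,\langle y\rangle) = \|x-\mathrm{proj}_y x\|_2/\|x\|_2$. Since $x-\mathrm{proj}_y x$ is orthogonal to $\mathrm{proj}_y x$, the Pythagorean theorem gives $\|x\|_2^2 = \|x-\mathrm{proj}_y x\|_2^2 + \|\mathrm{proj}_y x\|_2^2$, and combining this with $\|\mathrm{proj}_y x\|_2/\|x\|_2 = |\langle x,y\rangle|/(\|x\|_2\|y\|_2)$ (also from Remark \ref{sintheta}) yields
$$\sin^2\theta(\langle x\rangle,\langle y\rangle) = 1 - \frac{|\langle x,y\rangle|^2}{\|x\|_2^2\,\|y\|_2^2} = \frac{\|x\|_2^2\,\|y\|_2^2 - |\langle x,y\rangle|^2}{\|x\|_2^2\,\|y\|_2^2}.$$

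Next I would substitute the coordinates: $\|x\|_2^2 = |\alpha|^2+|\beta|^2$, $\|y\|_2^2 = |\gamma|^2+|\delta|^2$, and $\langle x,y\rangle = y^*x = \overline{\gamma}\,\alpha + \overline{\delta}\,\beta$. The numerator then becomes
$$(|\alpha|^2+|\beta|^2)(|\gamma|^2+|\delta|^2) - |\overline{\gamma}\,\alpha + \overline{\delta}\,\beta|^2 = |\alpha\delta - \beta\gamma|^2,$$
which is the Lagrange identity in $\mathbb{C}^2$. Expanding both sides, the terms $|\alpha|^2|\gamma|^2$ and $|\beta|^2|\delta|^2$ cancel, leaving $|\alpha|^2|\delta|^2 + |\beta|^2|\gamma|^2 - (\alpha\delta)\overline{(\beta\gamma)} - \overline{(\alpha\delta)}(\beta\gamma)$, which is exactly $|\alpha\delta-\beta\gamma|^2$. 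Taking positive square roots (the denominators $\sqrt{|\alpha|^2+|\beta|^2}$ and $\sqrt{|\gamma|^2+|\delta|^2}$ are nonzero because $x,y\neq 0$) gives the stated formula.

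There is essentially no obstacle here: the argument is a chain of substitutions followed by the routine expansion proving the Lagrange identity. The only points worth a sentence of comment are that the quantities involved are all nonnegative, so passing from $\sin^2\theta$ to $\sin\theta = \chi$ is unambiguous, and that the resulting expression is manifestly invariant under the rescalings $x\mapsto cx$, $y\mapsto c'y$ with $c,c'\neq 0$, which is consistent with $\chi$ depending only on the lines $\langle\alpha,\beta\rangle$ and $\langle\gamma,\delta\rangle$ rather than on the chosen representatives.
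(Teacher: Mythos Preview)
Your proof is correct. The paper does not give its own proof of this lemma; it is simply quoted from \cite[page 283]{Stewart}, so your derivation via Remark \ref{sintheta} and the Lagrange identity is a welcome, self-contained justification that fits cleanly with the definitions already in the text.
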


\begin{remark}Notice that $0\leq\chi(\langle\alpha,\beta\rangle, \langle\gamma,\delta\rangle)\leq 1$ for all lines $\langle\alpha,\beta\rangle, \langle\gamma,\delta\rangle$ in $\mathbb{C}^2$. Since the line $\langle 1,0\rangle$ is identified with the eigenvalue $\infty$ in PEPs, we see that the chordal distance allows us to measure the distance from $\infty$ to any other eigenvalue very easily. Moreover, such distance is never larger than one.
\end{remark}

Next we introduce the homogeneous eigenvalue condition number in which the change in the eigenvalue is measured using  the chordal distance and that we baptize as the Stewart-Sun eigenvalue condition number. This condition number was implicitly introduced for matrix pencils in \cite[page 294]{Stewart}, although an explicit definition is not given in \cite{Stewart}. See also \cite[page 40]{bernahu} for an explicit definition of this condition number for matrix polynomials. 

Note that in Definition \ref{cord-cond} below, $(\alpha_0,+\Delta\alpha_0,\beta_0+\Delta\beta_0)$ is the unique simple eigenvalue of $(P+\Delta P)(\alpha,\beta)$ that approaches $(\alpha_0,\beta_0)$ when $\Delta P$ approaches zero.

\begin{definition}
\label{cord-cond}
Let $(\alpha_0, \beta_0)$ be a simple eigenvalue of a regular matrix polynomial
$P(\alpha, \beta)= \sum_{i=0}^k \alpha^i \beta^{k-i} B_i$ of grade $k$ and let $x$ be a right eigenvector of $P(\alpha, \beta)$ associated with $(\alpha_0, \beta_0)$. We define
\begin{align*}
& \kappa_{\theta}((\alpha_0,\beta_0),P):=\lim_{\epsilon\to 0}\sup \bigg\{\frac{\chi((\alpha_0,\beta_0),(\alpha_0+\Delta\alpha_0,\beta_0+\Delta\beta_0))}{\epsilon}:\\
& [P(\alpha_0+\Delta\alpha_0,\beta_0+\Delta\beta_0)+\Delta P(\alpha_0+\Delta\alpha_0,\beta_0+\Delta\beta_0)](x+\Delta x)=0,\\
& \|\Delta B_i\|_2\leq\epsilon\omega_i, i=0:k\bigg\},
\end{align*}
where  $\Delta P(\alpha, \beta)=\sum_{i=0}^k \alpha^i \beta^{k-i}\Delta B_i$ and $\omega_i$, $i=0:k$, are nonnegative weights that allow flexibility in how the perturbations of $P(\alpha, \beta)$ are measured.
\end{definition}

As far as we know, no explicit formula for the Stewart-Sun condition number is available in the literature. We provide such an expression next.


\begin{theorem}\label{form-kappatheta}
Let $(\alpha_0, \beta_0) \neq (0,0)$ be a simple eigenvalue of $P(\alpha, \beta)=\sum_{i=0}^k \alpha^i \allowbreak \beta^{k-i} B_i$, and let $x$ and $y$ be, respectively, a right and a left eigenvector of $P(\alpha, \beta)$ associated with $(\alpha_0, \beta_0)$. Then,
\begin{equation}
\kappa_{\theta}((\alpha_0,\beta_0),P)=\left (\sum_{i=0}^k|\alpha_0|^{i}|\beta_0|^{k-i} \omega_i \right) \frac{\|y\|_2\|x\|_2}{|y^*(\overline{\beta_0}D_{\alpha} P(\alpha_0, \beta_0)-\overline{\alpha_0}D_{\beta} P(\alpha_0, \beta_0))x|},
\end{equation}
where $D_{a} \equiv \frac{\partial}{\partial a},$ that is, the partial derivative with respect to $a$.\end{theorem}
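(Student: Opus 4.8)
The plan is to run a first-order perturbation analysis in the spirit of Tisseur's proof of Theorem~\ref{teorTis}, but tracking the chordal distance instead of $|\Delta\lambda_0|$, and to exploit the homogeneity of $P(\alpha,\beta)$ at the decisive step. Fix a perturbation $\Delta P(\alpha,\beta)=\sum_{i=0}^k\alpha^i\beta^{k-i}\Delta B_i$ with $\|\Delta B_i\|_2\le\epsilon\omega_i$. Since $(\alpha_0,\beta_0)$ is simple, standard perturbation theory for simple eigenvalues gives, for $\epsilon$ small, a unique eigenvalue of $(P+\Delta P)(\alpha,\beta)$ near $(\alpha_0,\beta_0)$ depending smoothly on $\Delta P$; I would represent it by the (unique) vector $(\alpha_0+\Delta\alpha_0,\beta_0+\Delta\beta_0)$ with $(\Delta\alpha_0,\Delta\beta_0)$ orthogonal to $(\alpha_0,\beta_0)$, together with an eigenvector $x+\Delta x$, so that $\Delta\alpha_0,\Delta\beta_0,\Delta x=O(\epsilon)$. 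Substituting into $(P+\Delta P)(\alpha_0+\Delta\alpha_0,\beta_0+\Delta\beta_0)(x+\Delta x)=0$, Taylor expanding $P$ around $(\alpha_0,\beta_0)$, discarding $O(\epsilon^2)$ terms, and multiplying on the left by $y^*$ (which annihilates $y^*P(\alpha_0,\beta_0)(x+\Delta x)$ and the $O(\epsilon^2)$ term $y^*\Delta P(\alpha_0,\beta_0)\Delta x$) yields the scalar identity
$$\big(y^*D_\alpha P(\alpha_0,\beta_0)x\big)\,\Delta\alpha_0+\big(y^*D_\beta P(\alpha_0,\beta_0)x\big)\,\Delta\beta_0=-\,y^*\Delta P(\alpha_0,\beta_0)x+O(\epsilon^2).$$

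Next I would use that $P(\alpha,\beta)$ is homogeneous of degree $k$, so Euler's identity $\alpha D_\alpha P+\beta D_\beta P=kP$ gives $\alpha_0\big(y^*D_\alpha P(\alpha_0,\beta_0)x\big)+\beta_0\big(y^*D_\beta P(\alpha_0,\beta_0)x\big)=k\,y^*P(\alpha_0,\beta_0)x=0$. Hence the pair $\big(y^*D_\alpha P(\alpha_0,\beta_0)x,\ y^*D_\beta P(\alpha_0,\beta_0)x\big)$ is a scalar multiple $t\,(\beta_0,-\alpha_0)$, and a direct evaluation gives $t\,(|\alpha_0|^2+|\beta_0|^2)=y^*\big(\overline{\beta_0}D_\alpha P(\alpha_0,\beta_0)-\overline{\alpha_0}D_\beta P(\alpha_0,\beta_0)\big)x$, which is nonzero by the characterization of simple homogeneous eigenvalues (cf.\ \cite{DedTis2003}). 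Substituting $t(\beta_0,-\alpha_0)$ into the scalar identity turns its left-hand side into $t(\beta_0\Delta\alpha_0-\alpha_0\Delta\beta_0)$, so that
$$\alpha_0\Delta\beta_0-\beta_0\Delta\alpha_0=\frac{(|\alpha_0|^2+|\beta_0|^2)\,y^*\Delta P(\alpha_0,\beta_0)x}{y^*\big(\overline{\beta_0}D_\alpha P(\alpha_0,\beta_0)-\overline{\alpha_0}D_\beta P(\alpha_0,\beta_0)\big)x}+O(\epsilon^2).$$
By Lemma~\ref{ds-formula}, the numerator of $\chi\big((\alpha_0,\beta_0),(\alpha_0+\Delta\alpha_0,\beta_0+\Delta\beta_0)\big)$ is exactly $|\alpha_0(\beta_0+\Delta\beta_0)-\beta_0(\alpha_0+\Delta\alpha_0)|=|\alpha_0\Delta\beta_0-\beta_0\Delta\alpha_0|$, while with the orthogonal normalization its denominator is $\sqrt{|\alpha_0|^2+|\beta_0|^2}\,\sqrt{|\alpha_0|^2+|\beta_0|^2+O(\epsilon^2)}$; combining, and using $|y^*\Delta P(\alpha_0,\beta_0)x|=O(\epsilon)$,
$$\frac{\chi\big((\alpha_0,\beta_0),(\alpha_0+\Delta\alpha_0,\beta_0+\Delta\beta_0)\big)}{\epsilon}=\frac{|y^*\Delta P(\alpha_0,\beta_0)x|}{\epsilon\,\big|y^*\big(\overline{\beta_0}D_\alpha P(\alpha_0,\beta_0)-\overline{\alpha_0}D_\beta P(\alpha_0,\beta_0)\big)x\big|}+O(\epsilon).$$

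Finally I would maximize over admissible $\Delta P$ and let $\epsilon\to0$. Writing $y^*\Delta P(\alpha_0,\beta_0)x=\sum_{i=0}^k\alpha_0^i\beta_0^{k-i}\,y^*\Delta B_i x$ and bounding $|y^*\Delta B_i x|\le\|y\|_2\|\Delta B_i\|_2\|x\|_2\le\epsilon\omega_i\|y\|_2\|x\|_2$ gives $|y^*\Delta P(\alpha_0,\beta_0)x|\le\epsilon\,\|y\|_2\|x\|_2\sum_{i=0}^k|\alpha_0|^i|\beta_0|^{k-i}\omega_i$, with equality attained by the rank-one choice $\Delta B_i=\epsilon\,\omega_i\,\mu_i\,yx^*/(\|x\|_2\|y\|_2)$, where the unimodular $\mu_i$ are chosen so that all the terms $\alpha_0^i\beta_0^{k-i}\mu_i$ have a common argument; note $\|\Delta B_i\|_2=\epsilon\omega_i$ because $\|yx^*\|_2=\|y\|_2\|x\|_2$. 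Hence $\sup_{\Delta P}|y^*\Delta P(\alpha_0,\beta_0)x|/\epsilon=\|y\|_2\|x\|_2\sum_{i=0}^k|\alpha_0|^i|\beta_0|^{k-i}\omega_i$, and passing to the limit yields the claimed formula. The only step that is not mechanical is this last interchange of supremum and limit: one must check that the $O(\epsilon)$ remainders discarded above are $O(\epsilon)$ \emph{uniformly} over the perturbation set $\{\Delta P:\|\Delta B_i\|_2\le\epsilon\omega_i\}$, which follows from the uniform bounds in the perturbation theory for the simple eigenvalue $(\alpha_0,\beta_0)$, exactly as in \cite{Tis2000}. It remains only to remark that the resulting expression does not depend on the chosen representative $[\alpha_0,\beta_0]$ nor on the scalings of $x$ and $y$, since numerator and denominator scale in the same way under all these choices.
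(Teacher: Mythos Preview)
Your proof is correct and follows essentially the same architecture as the paper's: orthogonal normalization of the perturbed eigenvalue, first-order expansion, left-multiplication by $y^*$, computation of the chordal distance, and the same rank-one attainment construction. The one noteworthy difference is in how you isolate the combination $\overline{\beta_0}D_\alpha P-\overline{\alpha_0}D_\beta P$. The paper simply parameterizes the orthogonal increment as $[\Delta\alpha_0,\Delta\beta_0]=\mathfrak{h}[-\overline{\beta_0},\overline{\alpha_0}]$ and substitutes, so that this combination appears immediately as the coefficient of $\mathfrak{h}$. You instead invoke Euler's identity for the degree-$k$ homogeneous $P$ to show that $(y^*D_\alpha P\,x,\,y^*D_\beta P\,x)=t(\beta_0,-\alpha_0)$, and then recover $t(|\alpha_0|^2+|\beta_0|^2)=y^*(\overline{\beta_0}D_\alpha P-\overline{\alpha_0}D_\beta P)x$. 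This is a pleasant variant: it explains \emph{why} that particular combination is the right denominator (the gradient of $y^*Px$ is automatically tangent to the projective line, so only its component along $(\beta_0,-\alpha_0)$ survives), and it would in fact give $|\alpha_0\Delta\beta_0-\beta_0\Delta\alpha_0|$ even without committing to the orthogonal gauge for $(\Delta\alpha_0,\Delta\beta_0)$; you only need the orthogonality to control the denominator of $\chi$ to $O(\epsilon^2)$. The paper's substitution is marginally shorter; your Euler step is marginally more illuminating. Either way the argument is the same in substance.
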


\begin{proof}
Let  $(P+ \Delta P)(\alpha, \beta)$ be a small enough perturbation of $P(\alpha, \beta)$. Then, $(P+\Delta P)(\alpha, \beta)$ has a unique simple eigenvalue $(\tilde{\alpha_0}, \tilde{\beta_0})$, with associated eigenvector $x + \Delta x$, that approaches $(\alpha_0,\beta_0)$ when $\Delta P(\alpha,\beta)$ approaches zero. Since $\{[\alpha_0, \beta_0], [-\overline{\beta_0}, \overline{\alpha_0}]\}$ is an orthogonal basis for $\mathbb{C}^2$ (with respect to the standard inner product), we can choose a representative  for  $(\tilde{\alpha_0}, \tilde{\beta_0})$ of the form
$$[\alpha_0, \beta_0]+[\Delta \alpha_0, \Delta \beta_0],$$
where $\langle [\alpha_0, \beta_0],  [\Delta \alpha_0, \Delta \beta_0] \rangle = 0$ for any given representative $[\alpha_0,\beta_0]$ of $(\alpha_0,\beta_0)$. This implies that there exists a scalar $\frak{h}$ such that
\begin{equation}\label{kexpr}
[\Delta  \alpha_0, \Delta \beta_0]= \frak{h} [ -\overline{\beta_0}, \overline{\alpha_0}].
\end{equation}

Expanding for these representatives the left hand side of the constraint
$$[P(\alpha_0+\Delta \alpha_0, \beta_0+\Delta \beta_0) + \Delta P
(\alpha_0+\Delta \alpha_0, \beta_0+\Delta \beta_0)](x + \Delta x) =0$$
in the definition of $\kappa_{\theta}((\alpha_0, \beta_0), P)$ and keeping only the first order terms, we get
$$[D_{\alpha}P(\alpha_0, \beta_0) \Delta \alpha_0 + D_{\beta}P(\alpha_0, \beta_0) \Delta \beta_0]x + P(\alpha_0, \beta_0) \Delta x +\Delta P(\alpha_0, \beta_0) x = O(\epsilon^2).$$
If we multiply the previous equation by $y^*$ on the left, taking into account that $y$ is a left eigenvector of $P(\alpha, \beta)$ associated with $(\alpha_0, \beta_0)$, we get
\begin{equation}\label{eq-princ}
y^*[D_{\alpha}P(\alpha_0, \beta_0) \Delta \alpha_0 + D_{\beta}P(\alpha_0, \beta_0) \Delta \beta_0]x + y^*\Delta P(\alpha_0, \beta_0) x = O(\epsilon^2).
\end{equation}
Using (\ref{kexpr}),  (\ref{eq-princ}) becomes
$$y^*[D_{\alpha}P(\alpha_0, \beta_0) \frak{h}\overline{\beta_0} - D_{\beta}P(\alpha_0, \beta_0) \frak{h} \overline{\alpha_0}]x - y^*\Delta P(\alpha_0, \beta_0) x = O(\epsilon^2).$$
Since $(\alpha_0, \beta_0) \neq 0$ is a simple eigenvalue, by \cite[Theorem 3.3]{DedTis2003},
$$y^*[D_{\alpha}P(\alpha_0, \beta_0) \overline{\beta_0} - D_{\beta}P(\alpha_0, \beta_0)  \overline{\alpha_0}]x \neq 0.$$
Therefore,
\begin{equation}\label{frakh}
\frak{h}= \frac{y^*\Delta P(\alpha_0, \beta_0) x}{y^*[\overline{\beta_0}D_{\alpha}P(\alpha_0, \beta_0)  - \overline{\alpha_0}D_{\beta}P(\alpha_0, \beta_0)  ]x } + O(\epsilon^2).
\end{equation}
On the other hand,
\begin{align}\label{chordalbound}
\frac{\chi((\alpha_0, \beta_0), (\alpha_0+\Delta \alpha_0, \beta_0+ \Delta  \beta_0))}{\epsilon} &= \frac{|\alpha_0 \Delta \beta_0 - \beta_0 \Delta\alpha_0|}{\epsilon \sqrt{|\alpha_0|^2+|\beta_0|^2} \sqrt{|\alpha_0+\Delta \alpha_0|^2 + |\beta_0 + \Delta \beta_0|^2}}\nonumber \\
&= \frac{|\frak{h}|}{\epsilon} \frac{\sqrt{|\alpha_0|^2+|\beta_0|^2}}{\sqrt{|\alpha_0+\Delta \alpha_0|^2 + |\beta_0 + \Delta \beta_0|^2}}.
\end{align}
Since, by (\ref{frakh}),
\begin{equation}\label{hbound}
\frac{|\frak{h}|}{\epsilon} \leq  \left (\sum_{i=0}^k|\alpha_0|^{i}|\beta_0|^{k-i} \omega_i \right) \frac{\|y\|_2\|x\|_2}{|y^*(\overline{\beta_0}D_{\alpha} P(\alpha_0, \beta_0)-\overline{\alpha_0}D_{\beta} P(\alpha_0, \beta_0))x|} + O(\epsilon)
\end{equation}
and $\Delta \alpha_0$ and $\Delta \beta_0$ approach zero as $\epsilon \to 0$, from (\ref{chordalbound}) and (\ref{hbound}) we get
$$ \kappa_{\theta}((\alpha_0, \beta_0), P) \leq   \left (\sum_{i=0}^k|\alpha_0|^{i}|\beta_0|^{k-i} \omega_i \right) \frac{\|y\|_2\|x\|_2}{|y^*(\overline{\beta_0}D_{\alpha} P(\alpha_0, \beta_0)-\overline{\alpha_0}D_{\beta} P(\alpha_0, \beta_0))x|} .$$

Now we need to show that this upper bound on the Stewart-Sun condition number can be attained. Let $$\Delta B_i = sgn(\overline{\alpha_0}^i)sgn(\overline{\beta_0}^{k-i}) \epsilon \omega_i \frac{yx^*}{\|x\|_2\|y\|_2},\quad i=0:k,$$
where $sgn(z)= \frac{z}{|z|}$ if $z\neq 0$ and $sgn(0)=0$.
Note that, with this definition of $\Delta B_i$, we have
$$ \|\Delta B_i \|_2 =  \epsilon \omega_i, \ i=0:k,\quad \text{and}\quad |y^* \Delta P(\alpha_0, \beta_0) x| =\epsilon \left( \sum_{i=0}^k |\alpha_0|^i |\beta_0|^{k-i} \omega_i \right) \|x\|_2 \|y\|_2.$$
Thus, the inequality in (\ref{hbound}) becomes an equality and the result follows.
\end{proof}

\section{Comparisons of eigenvalue condition numbers of matrix polynomials}
\label{seccomp}
In this section we provide first a comparison between the Dedieu-Tisseur and Stewart-Sun homogeneous condition numbers and, as a consequence, we prove that these condition numbers are equivalent up to a moderate constant depending only on the degree of the polynomial. Then we compare the Stewart-Sun condition number with the non-homogeneous condition number in both its absolute and relative version; as a result, we see that these condition numbers can be very different in certain situations. A simple geometric interpretation of these differences is given in Section \ref{secgeom}. In the literature some comparisons can be found, as we will point out, but they provide inequalities among the condition numbers while our expressions are equalities.

\subsection{Comparison of the Dedieu-Tisseur and Stewart-Sun condition numbers}
As mentioned earlier, the Dedieu-Tisseur and the Stewart-Sun homogeneous condition numbers are defined following a very different approach. So it is a natural question to determine how they are related. We start with a result known in the literature.

\begin{theorem}{\rm\cite[Section 7]{Ded-first}\cite[Corollary 2.6]{bernahu}}
Let $(\alpha_0, \beta_0) \neq (0,0)$ be a simple eigenvalue of a regular matrix polynomial
$P(\alpha, \beta)= \sum_{i=0}^k \alpha^i \beta^{k-i} B_i$ of grade $k$. Assuming that the same weights $\omega_i$ are considered for both condition numbers, we have
$$ \kappa_{\theta}((\alpha_0, \beta_0), P)  \leq C \kappa_h((\alpha_0, \beta_0), P),$$
for some constant $C$.
\end{theorem}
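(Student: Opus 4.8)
The plan is to derive the inequality directly from the two explicit formulas already at our disposal: the expression for $\kappa_h$ in Theorem \ref{teorhomcondnumb} and the expression for $\kappa_\theta$ in Theorem \ref{form-kappatheta}. First I would fix a representative $[\alpha_0,\beta_0]$ of the eigenvalue and fixed right and left eigenvectors $x,y$, and observe that, written with these choices, the two formulas have \emph{exactly the same} denominator, namely $|y^*(\overline{\beta_0}D_\alpha P(\alpha_0,\beta_0)-\overline{\alpha_0}D_\beta P(\alpha_0,\beta_0))x|$, and the same factor $\|y\|_2\|x\|_2$ in the numerator. Hence forming the quotient cancels all of these terms and leaves
$$\frac{\kappa_\theta((\alpha_0,\beta_0),P)}{\kappa_h((\alpha_0,\beta_0),P)}=\frac{\sum_{i=0}^k|\alpha_0|^{i}|\beta_0|^{k-i}\omega_i}{\left(\sum_{i=0}^k|\alpha_0|^{2i}|\beta_0|^{2(k-i)}\omega_i^2\right)^{1/2}}.$$
Since both condition numbers are independent of the scaling of $x$, $y$ and of the choice of representative of $(\alpha_0,\beta_0)$, this ratio is well defined.

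Next I would recognize the right-hand side as the ratio $\|v\|_1/\|v\|_2$ of the $\ell^1$- to the $\ell^2$-norm of the vector $v\in\mathbb{R}^{k+1}$ whose entries are $v_i=|\alpha_0|^{i}|\beta_0|^{k-i}\omega_i$ for $i=0:k$. Because $v$ lives in a space of dimension $k+1$, the Cauchy--Schwarz inequality yields $\|v\|_1\le\sqrt{k+1}\,\|v\|_2$, and therefore
$$\kappa_\theta((\alpha_0,\beta_0),P)\le\sqrt{k+1}\;\kappa_h((\alpha_0,\beta_0),P),$$
so the claimed bound holds with $C=\sqrt{k+1}$, a constant depending only on the grade $k$ of $P$. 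It is worth noting that the reverse bound $\kappa_h\le\kappa_\theta$ follows just as easily from $\|v\|_2\le\|v\|_1$, which already shows that the two homogeneous condition numbers are equivalent up to the factor $\sqrt{k+1}$.

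There is no substantial obstacle in this argument: the only points requiring care are (i) verifying that the denominators in the formulas of Theorems \ref{teorhomcondnumb} and \ref{form-kappatheta} are literally identical, which can be read off from the statements themselves, and (ii) observing that the surviving quotient is a genuine $\ell^1/\ell^2$ norm ratio in a $(k+1)$-dimensional space, so that the elementary equivalence constants between these norms apply and produce the degree-dependent constant $\sqrt{k+1}$.
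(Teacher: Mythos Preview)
Your proposal is correct and matches the paper's own approach essentially line for line. The stated theorem is presented in the paper without proof (it is cited as a known result), but immediately afterward the paper proves the exact ratio in Theorem~\ref{DT-SS} and then Corollary~\ref{coro-comp} by introducing the same vector $v\in\mathbb{R}^{k+1}$ with entries $|\alpha_0|^i|\beta_0|^{k-i}\omega_i$, identifying the quotient as $\|v\|_2/\|v\|_1$, and invoking the standard $\ell^1$--$\ell^2$ norm equivalence to obtain the constant $\sqrt{k+1}$; this is exactly what you do.
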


To provide an exact relationship between the Dedieu-Tisseur and the Stewart-Sun homogeneous condition numbers, we simply use the  explicit formulas given for them  in Theorems \ref{teorhomcondnumb} and \ref{form-kappatheta}, respectively.
\begin{theorem}\label{DT-SS}
Let $(\alpha_0, \beta_0) \neq (0,0)$ be a simple eigenvalue of $P(\alpha, \beta)=\sum_{i=0}^k \alpha^i \allowbreak \beta^{k-i}B_i$. Then,
$$\kappa_h((\alpha_0, \beta_0), P) = \frac{\left(\sum_{i=0}^k|\alpha_0|^{2i}|\beta_0|^{2(k-i)} \omega_i^2 \right)^{1/2} }{\sum_{i=0}^k |\alpha_0|^{i}|\beta_0|^{k-i} \omega_i} \ \kappa_{\theta}((\alpha_0, \beta_0), P).$$
\end{theorem}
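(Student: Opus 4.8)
The proof is essentially immediate once we have the two explicit formulas, so the plan is simply to write down both and take their ratio.

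First I would invoke Theorem \ref{form-kappatheta}, which gives
\[
\kappa_{\theta}((\alpha_0,\beta_0),P)=\left(\sum_{i=0}^k|\alpha_0|^{i}|\beta_0|^{k-i}\omega_i\right)\frac{\|y\|_2\|x\|_2}{|y^*(\overline{\beta_0}D_{\alpha}P(\alpha_0,\beta_0)-\overline{\alpha_0}D_{\beta}P(\alpha_0,\beta_0))x|},
\]
and Theorem \ref{teorhomcondnumb}, which gives
\[
\kappa_{h}((\alpha_0,\beta_0),P)=\left(\sum_{i=0}^k|\alpha_0|^{2i}|\beta_0|^{2(k-i)}\omega_i^2\right)^{1/2}\frac{\|y\|_2\|x\|_2}{|y^*(\overline{\beta_0}D_{\alpha}P(\alpha_0,\beta_0)-\overline{\alpha_0}D_{\beta}P(\alpha_0,\beta_0))x|}.
\]
The key observation is that the two expressions share exactly the same factor
\[
\frac{\|y\|_2\|x\|_2}{|y^*(\overline{\beta_0}D_{\alpha}P(\alpha_0,\beta_0)-\overline{\alpha_0}D_{\beta}P(\alpha_0,\beta_0))x|},
\]
which is nonzero by \cite[Theorem 3.3]{DedTis2003} since $(\alpha_0,\beta_0)$ is a simple eigenvalue. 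Dividing the expression for $\kappa_h$ by the expression for $\kappa_\theta$, this common factor cancels and the ratio of the two condition numbers equals the ratio of the two weighted sums, giving precisely the stated identity.

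One small point worth addressing in the write-up is well-definedness: both formulas are independent of the choice of representative $[\alpha_0,\beta_0]$ of the eigenvalue line (homogeneity of degree $k$ in $(\alpha_0,\beta_0)$ in numerator and denominator matches), and also independent of the scaling of the eigenvectors $x$ and $y$, so the quotient is genuinely a function of the eigenvalue alone. There is no real obstacle here; the only thing to be careful about is simply quoting the two theorems correctly and noting that the denominator is nonzero so that the division is legitimate. I would present the argument in two or three lines.
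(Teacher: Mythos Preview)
Your proposal is correct and matches the paper's own argument exactly: the paper simply states that the identity follows by dividing the explicit formulas from Theorems \ref{teorhomcondnumb} and \ref{form-kappatheta}, with no further detail. Your extra remarks on nonvanishing of the denominator and independence of representatives are fine but not needed for the write-up.
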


The following result is an immediate consequence of Theorem \ref{DT-SS} and shows that, for moderate $k$, both homogeneous condition numbers are essentially the same.

\begin{corollary}
\label{coro-comp}
Let $(\alpha_0, \beta_0) \neq (0,0)$ be a simple eigenvalue of $P(\alpha, \beta)=\sum_{i=0}^k \alpha^i \beta^{k-i}B_i$. Then,
$$\frac{1}{\sqrt{k+1}}\leq \frac{\kappa_{h}((\alpha_0, \beta_0), P)}{\kappa_{\theta}((\alpha_0, \beta_0), P)} \leq 1.$$
\end{corollary}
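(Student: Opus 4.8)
The plan is to reduce the statement to two elementary inequalities between the $\ell^1$ and $\ell^2$ norms on $\mathbb{R}^{k+1}$, since Theorem \ref{DT-SS} already isolates the ratio of interest as a purely scalar quantity. Starting from
\[
\frac{\kappa_{h}((\alpha_0,\beta_0),P)}{\kappa_{\theta}((\alpha_0,\beta_0),P)}
=\frac{\left(\sum_{i=0}^k|\alpha_0|^{2i}|\beta_0|^{2(k-i)}\omega_i^2\right)^{1/2}}{\sum_{i=0}^k|\alpha_0|^{i}|\beta_0|^{k-i}\omega_i},
\]
I would set $a_i:=|\alpha_0|^{i}|\beta_0|^{k-i}\omega_i\ge 0$ for $i=0:k$ and $a:=(a_0,\dots,a_k)$, so that the ratio is exactly $\norm{a}_2/\norm{a}_1$. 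The denominator is positive here because it is precisely the quantity appearing in the numerator of the well-defined, finite, positive condition number $\kappa_{\theta}$ of Theorem \ref{form-kappatheta}, which is the standing assumption.

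For the upper bound I would use nonnegativity of the $a_i$: expanding $\norm{a}_1^2=\left(\sum_i a_i\right)^2=\sum_i a_i^2+\sum_{i\neq j}a_ia_j\ge\sum_i a_i^2=\norm{a}_2^2$ gives $\norm{a}_2/\norm{a}_1\le 1$. For the lower bound I would apply the Cauchy--Schwarz inequality to $a$ and the all-ones vector of $\mathbb{R}^{k+1}$, obtaining $\sum_i a_i\le\sqrt{k+1}\,\left(\sum_i a_i^2\right)^{1/2}$, that is $\norm{a}_1\le\sqrt{k+1}\,\norm{a}_2$, hence $\norm{a}_2/\norm{a}_1\ge 1/\sqrt{k+1}$. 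Combining the two estimates yields the corollary.

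There is no genuine obstacle: this is a one-line consequence of Theorem \ref{DT-SS} together with the standard comparison of vector norms, and the only point worth a remark is the non-vanishing of the denominator, addressed above. One may additionally record that both bounds are sharp — the value $1$ is attained when exactly one $a_i$ is nonzero (for instance $\beta_0=0$ with $\omega_k$ the only nonzero weight), and the value $1/\sqrt{k+1}$ is attained precisely when all the $a_i$ coincide and are nonzero — which explains why no constant better than $\sqrt{k+1}$ is possible in general.
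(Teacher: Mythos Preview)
Your proof is correct and follows essentially the same route as the paper: you introduce the nonnegative vector $a=(a_0,\dots,a_k)$ with $a_i=|\alpha_0|^{i}|\beta_0|^{k-i}\omega_i$, identify the ratio from Theorem~\ref{DT-SS} as $\|a\|_2/\|a\|_1$, and invoke the standard comparison $1/\sqrt{k+1}\le\|a\|_2/\|a\|_1\le 1$. The paper does exactly this (without spelling out the Cauchy--Schwarz step or the sharpness remark you added).
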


\begin{proof}
Let $v:=[|\beta_0|^k \omega_0, |\alpha_0||\beta_0|^{k-1} \omega_1, \ldots, |\alpha_0|^k \omega_k]^T\in\mathbb{R}^{k+1}$. From Theorem \ref{DT-SS}, we have
\begin{align*}
\frac{\kappa_{h}((\alpha_0, \beta_0), P)}{\kappa_{\theta}((\alpha_0, \beta_0), P)} = \frac{\|v\|_2}{\|v\|_1},
\end{align*}
where $\|.\|_1$ denotes the vector 1-norm \cite{Stewart}.
The result follows taking into account the fact that
$1/\sqrt{k+1}\leq\|v\|_2/\|v\|_1\leq 1.$
\end{proof}

Since the Dedieu-Tisseur and the Stewart-Sun condition numbers are equivalent and the definition of the Stewart-Sun condition number is much simpler and intuitive, we do not see any advantage in using the Dedieu-Tisseur condition number. Therefore, in the next subsection, we focus on comparing the Stewart-Sun condition number with the non-homogeneous condition numbers. The corresponding comparisons with the Dedieu-Tisseur condition number follow immediately from Theorem \ref{DT-SS} and Corollary \ref{coro-comp}.

\subsection{Comparison of the homogeneous and non-homogeneous eigenvalue condition numbers}

As we mentioned in Section \ref{sec2}, the main drawback of the non-homogeneous condition numbers is that they do not allow a unified treatment of all the eigenvalues of a matrix polynomial since these condition numbers are not defined for the infinite eigenvalues. Thus, some researchers prefer to use a homogeneous eigenvalue condition number instead, although in most applications the non-homogeneous eigenvalues $\lambda$ are the relevant quantities. In this section, we give an algebraic relationship between the Stewart-Sun condition number and the non-homogeneous (absolute and relative) eigenvalue condition number. We emphasize again that, by Theorem \ref{DT-SS} and Corollary \ref{coro-comp}, this relation also provides us with a relation between the Dedieu-Tisseur condition number and the non-homogeneous condition numbers.

We start with the only result we have found in the literature on this topic.

\begin{theorem}{\rm\cite[Corollary 2.7]{bernahu}}
Let $(\alpha_0, \beta_0)$ with $\alpha_0\neq 0$ and $\beta_0\neq 0$ be a simple eigenvalue of a regular matrix polynomial
$P(\alpha, \beta)= \sum_{i=0}^k \alpha^i \beta^{k-i} B_i$ of grade $k$ and let $\lambda_0:=\frac{\alpha_0}{\beta_0}$. Assuming that the same weights $\omega_i$ are considered for both condition numbers, there exists a constant $C$ such that
$$ \kappa_r(\lambda_0, P) \leq C\ \frac{1+|\lambda_0|^2}{|\lambda_0|}\ \kappa_h((\alpha_0, \beta_0), P).$$
\end{theorem}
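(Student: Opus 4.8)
The plan is to reduce the statement to the explicit formulas already available, together with a single application of Euler's identity for homogeneous functions. First I would invoke Corollary~\ref{coro-comp}, which gives $\kappa_{\theta}((\alpha_0,\beta_0),P)\le\sqrt{k+1}\,\kappa_h((\alpha_0,\beta_0),P)$; this reduces the claim to proving an inequality of the form $\kappa_r(\lambda_0,P)\le C'\,\frac{1+|\lambda_0|^2}{|\lambda_0|}\,\kappa_{\theta}((\alpha_0,\beta_0),P)$ (in fact with $C'=1$), after which $C=\sqrt{k+1}$ works.

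Since $\alpha_0\ne0$ and $\beta_0\ne0$ and $\kappa_{\theta}$ does not depend on the chosen representative (it is defined through the chordal distance, which does not), I would evaluate the formula of Theorem~\ref{form-kappatheta} at the convenient representative $(\alpha_0,\beta_0)=(\lambda_0,1)$. With this choice $P(\alpha_0,\beta_0)=P(\lambda_0)$, $D_{\alpha}P(\lambda_0,1)=P'(\lambda_0)$, $D_{\beta}P(\lambda_0,1)=\sum_{i=0}^{k-1}(k-i)\lambda_0^iB_i$, the eigenvectors $x,y$ of $P(\alpha,\beta)$ at $(\lambda_0,1)$ coincide with those of $P(\lambda)$ at $\lambda_0$, and the weight factor $\sum_{i=0}^k|\alpha_0|^i|\beta_0|^{k-i}\omega_i$ becomes $\sum_{i=0}^k|\lambda_0|^i\omega_i$, i.e.\ exactly the factor appearing in Tisseur's formula (Theorem~\ref{teorTis}) for $\kappa_r(\lambda_0,P)$.

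The crux is then to simplify the denominator. Euler's homogeneous function identity gives $\alpha\,D_{\alpha}P(\alpha,\beta)+\beta\,D_{\beta}P(\alpha,\beta)=kP(\alpha,\beta)$; sandwiching it between $y^*$ and $x$ at $(\alpha_0,\beta_0)$ and using $y^*P(\alpha_0,\beta_0)x=0$ gives $\alpha_0\,y^*D_{\alpha}P(\alpha_0,\beta_0)x+\beta_0\,y^*D_{\beta}P(\alpha_0,\beta_0)x=0$. Solving for $y^*D_{\beta}P(\alpha_0,\beta_0)x$ and substituting into the denominator of Theorem~\ref{form-kappatheta} produces
$$y^*\bigl(\overline{\beta_0}D_{\alpha}P(\alpha_0,\beta_0)-\overline{\alpha_0}D_{\beta}P(\alpha_0,\beta_0)\bigr)x=\frac{|\alpha_0|^2+|\beta_0|^2}{\beta_0}\,y^*D_{\alpha}P(\alpha_0,\beta_0)x,$$
which at the representative $(\lambda_0,1)$ equals $(1+|\lambda_0|^2)\,y^*P'(\lambda_0)x$. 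Inserting this back into Theorem~\ref{form-kappatheta} and comparing with Theorem~\ref{teorTis} yields the exact identity $\kappa_r(\lambda_0,P)=\frac{1+|\lambda_0|^2}{|\lambda_0|}\,\kappa_{\theta}((\alpha_0,\beta_0),P)$, and combining this with the first step finishes the proof with $C=\sqrt{k+1}$ (and, in fact, delivers a sharper, equality-based statement).

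I expect the Euler-identity manipulation to be the only step requiring any thought; everything else is substitution into the formulas of Theorems~\ref{teorTis} and~\ref{form-kappatheta} and Corollary~\ref{coro-comp}. The minor points to be careful about are recording that $\kappa_{\theta}$ (and $\kappa_h$) may be evaluated at the representative $(\lambda_0,1)$ precisely because $\beta_0\ne0$, and that $y^*D_{\alpha}P(\alpha_0,\beta_0)x\ne0$ so that the division is legitimate; the latter holds because $(\alpha_0,\beta_0)$ is a simple eigenvalue, and is in fact implicit in the nonvanishing fact from \cite[Theorem 3.3]{DedTis2003} already used in the proof of Theorem~\ref{form-kappatheta}.
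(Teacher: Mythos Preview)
Your proposal is correct and actually establishes the exact identity $\kappa_r(\lambda_0,P)=\frac{1+|\lambda_0|^2}{|\lambda_0|}\,\kappa_{\theta}((\alpha_0,\beta_0),P)$, which is precisely the paper's Theorem~\ref{ThKrtheta}(iii); combined with Corollary~\ref{coro-comp} this yields the stated inequality with $C=\sqrt{k+1}$, just as in the paper. The difference lies in how you reach that identity. The paper obtains it directly from the \emph{definitions} of $\kappa_r$ and $\kappa_\theta$: it rewrites the chordal distance $\chi((\alpha_0,\beta_0),(\tilde\alpha_0,\tilde\beta_0))$ in terms of $\Delta\lambda_0$ via Lemma~\ref{ds-formula} and the relation $\Delta\lambda_0=(\beta_0\Delta\alpha_0-\alpha_0\Delta\beta_0)/(\beta_0(\beta_0+\Delta\beta_0))$, obtaining \eqref{eq1}, and then passes to the limit $\epsilon\to0$. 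Your route instead compares the explicit \emph{formulas} of Theorems~\ref{teorTis} and~\ref{form-kappatheta}, with Euler's identity $\alpha D_\alpha P+\beta D_\beta P=kP$ doing the work of matching the two denominators. Your argument is purely algebraic and avoids any perturbation analysis, at the cost of relying on the (somewhat laboriously proved) formula of Theorem~\ref{form-kappatheta}; the paper's argument is more geometric and self-contained, needing only the definitions and Lemma~\ref{ds-formula}. The care points you flag (representative independence so that $(\lambda_0,1)$ may be used, and $y^*P'(\lambda_0)x\ne0$ by simplicity) are exactly the right ones.
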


The next theorem is the main result in this section and provides exact relationships between the Stewart-Sun condition number and the non-homogeneous condition numbers. Note that in Theorem \ref{ThKrtheta} an eigenvalue condition number of the reversal of a matrix polynomial is used, more precisely, $\kappa_a((1/\lambda_0),rev P)$. For $\lambda_0=\infty$, this turns into $\kappa_a(0, rev P)$ which can be interpreted as a non-homogeneous absolute condition number of $\lambda_0=\infty$.

\begin{theorem} \label{ThKrtheta}
Let $(\alpha_0, \beta_0)\neq (0,0)$ be a simple eigenvalue of a regular matrix polynomial
$P(\alpha, \beta)= \sum_{i=0}^k \alpha^i \beta^{k-i} B_i$ of grade $k$ and let $\lambda_0:=\frac{\alpha_0}{\beta_0}$, where $\lambda_0=\infty$ if $\beta_0=0$.
Assume that the same weights $\omega_i$ are considered in the definition of all the condition numbers appearing below, i.e., $\|\Delta B_i\|_2\leq\epsilon\omega_i, i=0:k$, in all of them. Then,
\begin{enumerate}
\item[(i)] if $\beta_0 \neq 0$,
\begin{equation}\label{abs-theta}
 \kappa_{\theta}((\alpha_0, \beta_0), P) = \kappa_a(\lambda_0, P)\ \frac{1}{1+|\lambda_0|^2};
 \end{equation}
\item[(ii)] if $\alpha_0 \neq 0$,
\begin{equation}\label{abs-theta-rev}
 \kappa_{\theta}((\alpha_0, \beta_0), P) = \kappa_a \left (\frac{1}{\lambda_0}, rev P\right)\ \frac{1}{1+\left |\frac{1}{\lambda_0}\right|^2} ;
 \end{equation}
 \item[(iii)] if $\alpha_0\neq 0$ and $\beta_0 \neq 0$,
 \begin{equation}\label{rel-theta}
 \kappa_{\theta}((\alpha_0, \beta_0), P) = \kappa_r(\lambda_0, P)\ \frac{|\lambda_0|}{1+|\lambda_0|^2} .
\end{equation} \end{enumerate}

\end{theorem}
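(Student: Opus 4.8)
The plan is to derive all three identities directly from the closed-form expressions already available, namely the formula for $\kappa_\theta$ in Theorem~\ref{form-kappatheta} and the formulas for $\kappa_a$ and $\kappa_r$ in Theorem~\ref{teorTis}. The only genuine computation is an application of the chain rule to the relation $P(\alpha,\beta)=\beta^k P(\lambda)$ with $\lambda=\alpha/\beta$, combined with the eigenvector identity $P(\lambda_0)x=0$; everything else is algebraic bookkeeping.

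First I would prove part (i). Fix a representative $[\alpha_0,\beta_0]$ with $\beta_0\neq 0$, so that $\lambda_0=\alpha_0/\beta_0$ is finite, and let $x,y$ be right and left eigenvectors of $P(\alpha,\beta)$ at $(\alpha_0,\beta_0)$; these are also eigenvectors of $P(\lambda)$ at $\lambda_0$. Differentiating $P(\alpha,\beta)=\beta^k P(\alpha/\beta)$ gives $D_\alpha P(\alpha,\beta)=\beta^{k-1}P'(\lambda)$ and $D_\beta P(\alpha,\beta)=k\beta^{k-1}P(\lambda)-\alpha\beta^{k-2}P'(\lambda)$. Multiplying $\overline{\beta_0}D_\alpha P(\alpha_0,\beta_0)-\overline{\alpha_0}D_\beta P(\alpha_0,\beta_0)$ by $x$ on the right and by $y^*$ on the left, the term containing $P(\lambda_0)$ drops out because $P(\lambda_0)x=0$, leaving $y^*\bigl(\overline{\beta_0}D_\alpha P(\alpha_0,\beta_0)-\overline{\alpha_0}D_\beta P(\alpha_0,\beta_0)\bigr)x=|\beta_0|^{k-2}\bigl(|\alpha_0|^2+|\beta_0|^2\bigr)\,y^*P'(\lambda_0)x$. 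For the numerator in Theorem~\ref{form-kappatheta}, the identity $|\alpha_0|^i|\beta_0|^{k-i}=|\beta_0|^k|\lambda_0|^i$ gives $\sum_{i=0}^k|\alpha_0|^i|\beta_0|^{k-i}\omega_i=|\beta_0|^k\sum_{i=0}^k|\lambda_0|^i\omega_i$. Substituting both expressions into the formula for $\kappa_\theta$, the powers of $|\beta_0|$ collapse to $|\beta_0|^2/(|\alpha_0|^2+|\beta_0|^2)=1/(1+|\lambda_0|^2)$, and the remaining factor is exactly $\kappa_a(\lambda_0,P)$ by Theorem~\ref{teorTis}; this is \eqref{abs-theta}.

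Next I would obtain part (ii) by reduction to part (i) through the reversal. The key observation is that, in homogeneous form, $(rev P)(\alpha,\beta)=P(\beta,\alpha)$, so $(\alpha_0,\beta_0)$ is a simple eigenvalue of $P$ if and only if $(\beta_0,\alpha_0)$ is a simple eigenvalue of $rev P$, with the same right and left eigenvectors. Using $D_\alpha(rev P)(\beta_0,\alpha_0)=D_\beta P(\alpha_0,\beta_0)$ and $D_\beta(rev P)(\beta_0,\alpha_0)=D_\alpha P(\alpha_0,\beta_0)$, together with the relabelling of the weights (the matrix $B_i$ now multiplies $\alpha^{k-i}\beta^{i}$ in $rev P$, so its weight is still $\omega_i$), the formula of Theorem~\ref{form-kappatheta} yields $\kappa_\theta((\alpha_0,\beta_0),P)=\kappa_\theta((\beta_0,\alpha_0),rev P)$. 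Since $\alpha_0\neq 0$, part (i) applies to $rev P$ at $(\beta_0,\alpha_0)$, whose associated non-homogeneous eigenvalue is $\beta_0/\alpha_0=1/\lambda_0$ (finite, possibly zero), and \eqref{abs-theta-rev} follows; when $\beta_0=0$ one has $1/\lambda_0=0$, the factor equals $1$, and $\kappa_a(0,rev P)$ is read as the absolute condition number of $\lambda_0=\infty$, as announced.

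Finally, part (iii) is immediate: when $\alpha_0\neq 0$ and $\beta_0\neq 0$ the eigenvalue $\lambda_0$ is finite and nonzero, and comparing the two formulas in Theorem~\ref{teorTis} gives $\kappa_a(\lambda_0,P)=|\lambda_0|\,\kappa_r(\lambda_0,P)$; inserting this into \eqref{abs-theta} produces \eqref{rel-theta}. (Alternatively, one can combine \eqref{abs-theta-rev} with Lemma~\ref{kappa-a-r}.) I do not anticipate a serious obstacle, since every step is an exact substitution. The point that needs the most care is the weight bookkeeping under the reversal in part (ii): one must keep the weight $\omega_i$ attached to the coefficient $B_i$ even though $B_i$ multiplies $\alpha^{k-i}\beta^{i}$ in $rev P$, which is precisely the convention already adopted in the proof of Lemma~\ref{kappa-a-r}. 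A minor secondary point is to perform the chain-rule computation with the grade-$k$ homogenization rather than the degree, which is harmless because any vanishing leading coefficients contribute zero.
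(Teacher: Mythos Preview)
Your proof is correct but follows a different route from the paper. The paper establishes (i) directly from the \emph{definitions} of the two condition numbers: it writes the chordal distance between $(\alpha_0,\beta_0)$ and the perturbed eigenvalue explicitly as $|\Delta\lambda_0|/\bigl(\sqrt{1+|\lambda_0|^2}\sqrt{1+|\lambda_0+\Delta\lambda_0|^2}\bigr)$ and then passes to the supremum limit, never invoking the closed form of Theorem~\ref{form-kappatheta}. It then derives (iii) from (i) by inserting the factor $|\lambda_0|$, and obtains (ii) from (iii) via Lemma~\ref{kappa-a-r} together with a separate verification at $\lambda_0=\infty$ using the explicit formulas. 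You, by contrast, compare the closed-form expressions of Theorems~\ref{teorTis} and~\ref{form-kappatheta} through the chain rule applied to $P(\alpha,\beta)=\beta^kP(\alpha/\beta)$, and for (ii) you exploit the clean symmetry $\kappa_\theta((\alpha_0,\beta_0),P)=\kappa_\theta((\beta_0,\alpha_0),rev P)$ to reduce to (i) for $rev P$, which handles $\lambda_0=\infty$ uniformly without a case split. The paper's approach has the conceptual advantage of working at the level of the defining suprema and so not relying on Theorem~\ref{form-kappatheta}; your approach is more mechanical but arguably tidier, and the reversal symmetry in your treatment of (ii) is a nice touch. One cosmetic slip: in your chain-rule identity the factor should be $\beta_0^{\,k-2}$ rather than $|\beta_0|^{k-2}$ before taking absolute values, but since only the modulus enters $\kappa_\theta$ this is harmless.
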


\begin{proof}
Assume first that $\beta_0\neq 0$, which implies that $\lambda_0$ is finite.  Let $(\tilde{\alpha_0}, \tilde{\beta_0}):= (\alpha_0+ \Delta \alpha_0, \beta_0+\Delta \beta_0)$ be a perturbation of $(\alpha_0, \beta_0)$ small enough so that $\beta_0+\Delta \beta_0 \neq 0$, and let $\lambda_0+ \Delta \lambda_0:=\frac{\tilde{\alpha_0}}{\tilde{\beta_0}}$. Note that
\begin{equation}\label{deltala-be-al}
\Delta \lambda_0 = \frac{\tilde{\alpha_0} \beta_0 - \alpha_0 \tilde{\beta_0}}{\beta_0 \tilde{\beta_0}}=\frac{\beta_0\Delta \alpha_0 - \alpha_0\Delta \beta_0}{\beta_0(\beta_0+\Delta \beta_0)} .
\end{equation}
Then, we have, by Lemma \ref{ds-formula},
\begin{align}
\chi((\alpha_0, \beta_0), (\tilde{\alpha_0}, \tilde{\beta_0}))
& =  \frac{| \alpha_0 \Delta{\beta_0} - \beta_0 \Delta{\alpha_0}|}{|\beta_0| \sqrt{1+|\lambda_0|^2}|\beta_0+\Delta{\beta_0}|  \sqrt{1+|\lambda_0 +\Delta \lambda_0|^2}} \nonumber\\
&= \frac{| \Delta \lambda_0|}{\sqrt{1+|\lambda_0|^2} \sqrt{1+|\lambda_0+\Delta \lambda_0|^2}}\label{eq1}
\end{align}
where the second equality follows from (\ref{deltala-be-al}).
 Then, as $\epsilon \to 0$ in the definition of the condition numbers (which implies that $|\Delta\alpha_0|$ and $|\Delta \beta_0|$ approach $ 0$ as well, using a continuity argument), we have $\Delta \lambda_0 \to 0.$
Thus, bearing in mind Definitions \ref{rel-cond} and \ref{cord-cond}, (\ref{abs-theta}) follows from (\ref{eq1}).

Assume now that $\alpha_0\neq 0$ and $\beta_0 \neq 0$, that is, $\lambda_0 \neq 0$ and $\lambda_0$ is not  an infinite eigenvalue. Then, from (\ref{eq1}), we get
$$\chi((\alpha_0,\beta_0),(\tilde\alpha_0,\tilde\beta_0))= \, \frac{|\Delta \lambda_0|}{|\lambda_0|} \frac{|\lambda_0|}{\sqrt{1+|\lambda_0|^2} \sqrt{1+|\lambda_0+\Delta \lambda_0|^2}}.$$
This implies (\ref{rel-theta}). By Lemma \ref{kappa-a-r}, (\ref{abs-theta-rev}) follows from \eqref{rel-theta} for finite, nonzero eigenvalues.
It only remains to prove \eqref{abs-theta-rev} for $\lambda_0=\infty$, which corresponds to $(\alpha_0,\beta_0)=(1,0)$. This follows from Theorem \ref{teorTis} applied to $rev P$ and Theorem \ref{form-kappatheta}, which yield:
$$\kappa_a(0, rev P) = \frac{\omega_k \|x\|_2 \|y\|_2}{|y^*B_{k-1}x|}=\kappa_{\theta}((1,0), P).$$
\end{proof}

From Theorem \ref{ThKrtheta}, we immediately get
$$\kappa_{\theta}((\alpha_0,\beta_0),P)\leq\kappa_r(\lambda_0,P) \quad \text{for} \ 0<|\lambda_0|<\infty,$$
as a consequence of \eqref{rel-theta}, and
$$\kappa_{\theta}((\alpha_0,\beta_0),P)\leq\kappa_a(\lambda_0,P)\quad \text{for}\ 0\leq|\lambda_0|<\infty,  $$
as a consequence of \eqref{abs-theta}. In addition, Theorem \ref{ThKrtheta} guarantees that there exist values of $\lambda_0$ for which $\kappa_\theta((\alpha_0,\beta_0),P)$ and $\kappa_r(\lambda_0,P)$ are very different and also values for which they are very similar. The same happens for $\kappa_{\theta}((\alpha_0,\beta_0),P)$ and $\kappa_a(\lambda_0,P)$. In the rest of this subsection we explore some of these scenarios.

The next result follows directly from Theorem \ref{ThKrtheta} and says that for small $|\lambda_0|$, $\kappa_{\theta}((\alpha_0, \beta_0),P)$ is essentially $\kappa_a(\lambda_0, P)$, while for  $|\lambda_0| \geq 1$, $\kappa_{\theta}((\alpha_0, \beta_0),P)$ is essentially $\kappa_a(1/\lambda_0, rev P)$.

\begin{corollary}\label{interp1}
Let $(\alpha_0, \beta_0)\neq (0,0)$ be a simple eigenvalue of a regular matrix polynomial
$P(\alpha, \beta)= \sum_{i=0}^k \alpha^i \beta^{k-i} B_i$ of grade $k$ and let $\lambda_0:=\frac{\alpha_0}{\beta_0}$, where $\lambda_0=\infty$ if $\beta_0=0$.
Assume that the same weights $\omega_i$ are considered in the definition of the condition numbers that appear below. Then
\begin{itemize}
\item[(i)]If $0 \leq |\lambda_0| \leq 1$, we have
\begin{equation}\label{less1}
\frac{\kappa_a(\lambda_0, P)}{2} \leq \kappa_{\theta}((\alpha_0, \beta_0), P)  \leq \kappa_a(\lambda_0, P).
\end{equation}
\item[(ii)] If  $|\lambda_0| \geq  1$, we have
\begin{equation}\label{more1}
\frac{\kappa_a\left (\frac{1}{\lambda_0}, rev P \right)}{2}   \leq \kappa_{\theta}\left ((\alpha_0, \beta_0), P \right)  \leq  \kappa_a \left (\frac{1}{\lambda_0}, rev P \right).
\end{equation}
\end{itemize}
\end{corollary}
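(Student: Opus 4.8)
The plan is to derive Corollary~\ref{interp1} directly from the exact identities in Theorem~\ref{ThKrtheta} by bounding the scalar multipliers that appear there. For part (i), the relevant identity is \eqref{abs-theta}, namely $\kappa_{\theta}((\alpha_0,\beta_0),P) = \kappa_a(\lambda_0,P)\,\frac{1}{1+|\lambda_0|^2}$, valid whenever $\beta_0\neq 0$ (note $0\le|\lambda_0|\le 1$ forces $\lambda_0$ finite, hence $\beta_0\neq0$). First I would observe that the function $t\mapsto \frac{1}{1+t^2}$ is decreasing on $[0,1]$, so for $0\le|\lambda_0|\le 1$ it takes values in $[\tfrac12,1]$. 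Substituting this two-sided bound on the multiplier into \eqref{abs-theta} immediately yields $\frac{\kappa_a(\lambda_0,P)}{2}\le \kappa_{\theta}((\alpha_0,\beta_0),P)\le \kappa_a(\lambda_0,P)$, which is \eqref{less1}.

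For part (ii), the natural tool is the symmetric identity \eqref{abs-theta-rev}, $\kappa_{\theta}((\alpha_0,\beta_0),P) = \kappa_a\!\left(\tfrac{1}{\lambda_0},\operatorname{rev}P\right)\frac{1}{1+|1/\lambda_0|^2}$, which holds whenever $\alpha_0\neq0$; and $|\lambda_0|\ge 1$ (including $\lambda_0=\infty$, i.e. $(\alpha_0,\beta_0)=(1,0)$) guarantees $\alpha_0\neq0$. Setting $s:=|1/\lambda_0|\in[0,1]$, the same monotonicity argument gives $\frac{1}{1+s^2}\in[\tfrac12,1]$, and plugging this into \eqref{abs-theta-rev} produces \eqref{more1}. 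The only mildly delicate point is the boundary case $\lambda_0=\infty$: there $1/\lambda_0 = 0$, $|1/\lambda_0|^2 = 0$, the multiplier equals $1$, and \eqref{abs-theta-rev} was already established for this case in the proof of Theorem~\ref{ThKrtheta} (it reduces to $\kappa_a(0,\operatorname{rev}P)=\kappa_{\theta}((1,0),P)$), so \eqref{more1} holds trivially with equality on the right.

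There is essentially no obstacle here: the corollary is a one-line consequence of already-proved exact equalities together with the elementary estimate $\tfrac12\le\frac{1}{1+t^2}\le 1$ for $t\in[0,1]$. If anything, the only thing to be careful about is matching the domain hypotheses of each case of Theorem~\ref{ThKrtheta} to the ranges of $|\lambda_0|$ in the corollary, and noting that the two cases overlap exactly at $|\lambda_0|=1$, where $\frac{1}{1+|\lambda_0|^2}=\frac{1}{1+|1/\lambda_0|^2}=\tfrac12$, so both inequalities are consistent there.
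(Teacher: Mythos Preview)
Your proposal is correct and matches the paper's approach exactly: the paper states that the corollary ``follows directly from Theorem~\ref{ThKrtheta}'' without further argument, and your derivation via the elementary bound $\tfrac12\le\frac{1}{1+t^2}\le 1$ for $t\in[0,1]$ applied to the multipliers in \eqref{abs-theta} and \eqref{abs-theta-rev} is precisely what is intended.
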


From Corollary \ref{interp1}, we can informally state that for $0\leq|\lambda_0|\leq 1$, $\kappa_{\theta}((\alpha_0,\beta_0),P)$ behaves more as an absolute than as a relative eigenvalue condition number of $P$. The same happens for $|\lambda_0|\geq 1$ but with respect to $rev P$ and $1/\lambda_0$.

Taking into account  Lemma \ref{kappa-a-r} and the fact that $\kappa_a(\lambda_0, P) = \kappa_r(\lambda_0,P)\cdot |\lambda_0|$, the observations in (\ref{less1}) and (\ref{more1})  lead to the following conclusions.

\begin{corollary}\label{interp2}
Let $(\alpha_0, \beta_0)\neq (0,0)$ be a simple eigenvalue of a regular matrix polynomial
$P(\alpha, \beta)= \sum_{i=0}^k \alpha^i \beta^{k-i} B_i$ of grade $k$ and let $\lambda_0:=\frac{\alpha_0}{\beta_0}$, where $\lambda_0=\infty$ if $\beta_0=0$.
Assume that the same weights $\omega_i$ are considered in the definition of the condition numbers appearing below. Then,
\begin{itemize}
\item[(i)]  If $0< |\lambda_0| \leq 1$, \text{we have}
$$\kappa_r(\lambda_0, P)\frac{|\lambda_0|}{2}\leq \kappa_{\theta}((\alpha_0, \beta_0), P)  \leq \kappa_r(\lambda_0, P)|\lambda_0|,$$
\item[(ii)] If  $1 \leq |\lambda_0| < \infty$, \text{we have}
\begin{equation*}\label{more2}
\kappa_r(\lambda_0, P)\frac{1}{2|\lambda_0|}\leq \kappa_{\theta}((\alpha_0, \beta_0), P)  \leq \kappa_r(\lambda_0, P)\frac{1}{|\lambda_0|}.
\end{equation*}
\end{itemize}
\end{corollary}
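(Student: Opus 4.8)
The plan is to obtain both parts as straightforward algebraic substitutions into the two-sided estimates already established in Corollary \ref{interp1}, using only the elementary identity $\kappa_a(\lambda_0,P)=|\lambda_0|\,\kappa_r(\lambda_0,P)$ (valid whenever $\lambda_0\neq 0$ is finite, since this is immediate from the explicit formulas in Theorem \ref{teorTis}) together with Lemma \ref{kappa-a-r}. No new perturbation analysis is needed; everything has been done in Theorem \ref{ThKrtheta} and its corollary.

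For part (i), I would start from the hypothesis $0<|\lambda_0|\le 1$, which in particular guarantees $\lambda_0$ is finite and nonzero, so that $\kappa_r(\lambda_0,P)$ is well defined and $\kappa_a(\lambda_0,P)=|\lambda_0|\,\kappa_r(\lambda_0,P)$. Plugging this identity into the chain $\frac{1}{2}\kappa_a(\lambda_0,P)\le \kappa_\theta((\alpha_0,\beta_0),P)\le \kappa_a(\lambda_0,P)$ from Corollary \ref{interp1}(i) yields exactly
$\kappa_r(\lambda_0,P)\frac{|\lambda_0|}{2}\le \kappa_\theta((\alpha_0,\beta_0),P)\le \kappa_r(\lambda_0,P)|\lambda_0|$.

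For part (ii), I would start from $1\le|\lambda_0|<\infty$, so $\lambda_0$ is again a simple, nonzero, finite eigenvalue of $P(\lambda)$ and Lemma \ref{kappa-a-r} applies, giving $\kappa_a\!\left(\tfrac{1}{\lambda_0},revP\right)=\kappa_r(\lambda_0,P)/|\lambda_0|$. Substituting this into the chain $\frac{1}{2}\kappa_a\!\left(\tfrac{1}{\lambda_0},revP\right)\le \kappa_\theta((\alpha_0,\beta_0),P)\le \kappa_a\!\left(\tfrac{1}{\lambda_0},revP\right)$ from Corollary \ref{interp1}(ii) produces $\kappa_r(\lambda_0,P)\frac{1}{2|\lambda_0|}\le \kappa_\theta((\alpha_0,\beta_0),P)\le \kappa_r(\lambda_0,P)\frac{1}{|\lambda_0|}$, which is the claim.

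There is essentially no obstacle here; the only point requiring a moment's care is making sure the hypotheses of Corollary \ref{interp2} (namely $\lambda_0\neq 0$ in (i) and $\lambda_0$ finite in (ii)) are precisely what is needed so that both $\kappa_r(\lambda_0,P)$ is defined and Lemma \ref{kappa-a-r} is applicable — which they are, by design. Hence the corollary is an immediate consequence of Corollary \ref{interp1}, Lemma \ref{kappa-a-r}, and the identity $\kappa_a(\lambda_0,P)=|\lambda_0|\,\kappa_r(\lambda_0,P)$, and the proof can be stated in just a few lines.
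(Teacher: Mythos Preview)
Your proposal is correct and matches the paper's approach exactly: the paper also derives Corollary \ref{interp2} directly from Corollary \ref{interp1} by substituting the identity $\kappa_a(\lambda_0,P)=|\lambda_0|\,\kappa_r(\lambda_0,P)$ for part (i) and Lemma \ref{kappa-a-r} for part (ii). Your checks on when $\kappa_r$ is defined and when Lemma \ref{kappa-a-r} applies are appropriate and nothing further is needed.
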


\begin{remark}\label{kappainterp}
Using the results in this subsection, we can give a description (in function of $|\lambda_0|$)  of the behavior of the Stewart-Sun condition number in terms of the non-homogeneous condition numbers. For this purpose, we use $\kappa_a(\lambda_0,P)=\kappa_r(\lambda_0,P)|\lambda_0|$ to obtain the following relations:
\begin{itemize}
	\item[(i)] From (\ref{abs-theta}), we get that,  if $|\lambda_0|=1$, then
	$$\kappa_{\theta}((\alpha_0, \beta_0), P) = \frac{\kappa_{a}(\lambda_0, P)}{2} = \frac{\kappa_r(\lambda_0, P)}{2}.$$
	\item[(ii)] From (\ref{less1}), we get, as $|\lambda_0|$ approaches 0,
	$$\kappa_{\theta}((\alpha_0, \beta_0), P) \approx \kappa_{a}(\lambda_0, P) \ll \kappa_r(\lambda_0, P).$$
	\item[(iii)]  From (\ref{more1}), Lemma \ref{kappa-a-r}, and the fact that  $\kappa_a(1/\lambda_0, rev P) = \kappa_r(1/\lambda_0,rev P)\cdot \left |\frac{1}{\lambda_0}\right|$, we get, as $|\lambda_0|$ approaches $\infty$,
	$$\kappa_{\theta}((\alpha_0, \beta_0), P) \approx \kappa_{a}\left  (\frac{1}{\lambda_0}, rev P  \right) \ll \kappa_r\left (\frac{1}{\lambda_0}, rev P\right )= \kappa_{r}(\lambda_0, P) \ll \kappa_a(\lambda_0, P).$$
\end{itemize}
\end{remark}

\section{A geometric interpretation of the relationship between homogeneous and non-homogeneous condition numbers}
\label{secgeom}

In Theorem \ref{ThKrtheta}, we provided an exact relationship between the Stewart-Sun homogeneous condition number and the non-homogeneous condition numbers. This relationship involves the factors  $\frac{1}{1+|\lambda_0|^2}$ and $\frac{1}{1+\left |\frac{1}{\lambda_0}\right|^2}$, when the Stewart-Sun condition number is compared with the absolute non-homogeneous condition number, and involves the factor $\frac{|\lambda_0|}{1+|\lambda_0|^2}$,  when the Stewart-Sun {condition} number is compared with the relative non-homogeneous condition number. In this section we give a geometric interpretation of these factors, which leads to a natural understanding of the situations discussed in Remark \ref{kappainterp} where the homogeneous and non-homogeneous condition numbers can be very different. The reader should bear in mind, once again, that in most applications of PEPs, the quantities of interest are the non-homogeneous eigenvalues, which can be accurately computed with the current algorithms only if the non-homogeneous condition numbers are moderate.

\subsection{Geometric interpretation in terms of the chordal distance}
The factors $\frac{1}{1+|\lambda_0|^2}$, $\frac{1}{1+\left | \frac{1}{\lambda_0}\right |^2}$ and $\frac{|\lambda_0|}{1+|\lambda_0|^2}$ appearing in (\ref{abs-theta}),  (\ref{abs-theta-rev}) and (\ref{rel-theta}), respectively, can be interpreted in terms of the chordal distance as we show in the next theorem. We do not include a proof of this result since it can be immediately obtained from the  definition of chordal distance {(Definition \ref{def-chord})} and Remark \ref{sintheta}. Note that $\chi((\alpha_0, \beta_0), (1,0))$ can be seen as the chordal distance from ``$\lambda_0=\frac{\alpha_0}{\beta_0}$ to $\infty$'', while  $\chi((\alpha_0, \beta_0), (0,1))$ can be seen as the chordal distance from ``$\lambda_0=\frac{\alpha_0}{\beta_0}$ to 0''.

\begin{proposition}\label{geometric1}
Let $(\alpha_0, \beta_0)\neq (0,0)$  and let $\lambda_0:=\frac{\alpha_0}{\beta_0}$, where $\lambda_0=\infty$ if $\beta_0=0$.  Let $
\theta$ denote the angle between {$(\alpha_0, \beta_0 )$ and $(1,0)$.}
Then,

\begin{itemize}
\item[(i)] If $\beta_0 \neq 0$, then
$$\frac{1}{1+|\lambda_0|^2}=\chi((\alpha_0, \beta_0), (1,0))^2=\sin^2(\theta).$$
\item[(ii)] If $\alpha_0\neq 0$, then
$$\frac{1}{1+\left |\frac{1}{\lambda_0}\right|^2}= \chi((\alpha_0, \beta_0), (0,1))^2 = \cos^2(\theta).$$
\item[(iii)] If $\alpha_0\neq 0$ and $\beta_0 \neq 0$, then
$$\frac{|\lambda_0|}{1+|\lambda_0|^2} = \chi((\alpha_0, \beta_0), (1,0))\ \chi((\alpha_0, \beta_0), (0,1))= \sin(\theta) \cos(\theta)  .$$
\end{itemize}
\end{proposition}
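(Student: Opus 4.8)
The plan is to verify each of the three identities directly from the definition of the chordal distance (Definition~\ref{def-chord}) together with Lemma~\ref{ds-formula}, and then recast the resulting trigonometric expressions using Remark~\ref{sintheta}. Since $\theta$ is defined as the angle between the lines $\langle \alpha_0, \beta_0\rangle$ and $\langle 1, 0\rangle$, the quantity $\cos\theta$ equals $|\langle (\alpha_0,\beta_0),(1,0)\rangle|/(\|(\alpha_0,\beta_0)\|_2\|(1,0)\|_2) = |\alpha_0|/\sqrt{|\alpha_0|^2+|\beta_0|^2}$, and hence $\sin^2\theta = 1-\cos^2\theta = |\beta_0|^2/(|\alpha_0|^2+|\beta_0|^2)$. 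These two elementary observations are the only facts about $\theta$ that are needed.

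For part~(i), assuming $\beta_0\neq 0$ so that $\lambda_0=\alpha_0/\beta_0$ is finite, I would apply Lemma~\ref{ds-formula} to the pair of lines $\langle\alpha_0,\beta_0\rangle$ and $\langle 1,0\rangle$, obtaining
$$\chi((\alpha_0,\beta_0),(1,0)) = \frac{|\alpha_0\cdot 0 - \beta_0\cdot 1|}{\sqrt{|\alpha_0|^2+|\beta_0|^2}\cdot 1} = \frac{|\beta_0|}{\sqrt{|\alpha_0|^2+|\beta_0|^2}}.$$
Squaring and dividing numerator and denominator by $|\beta_0|^2$ gives $1/(1+|\lambda_0|^2)$, and by the computation of $\sin^2\theta$ above this also equals $\sin^2\theta$. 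Part~(ii) is entirely analogous: with $\alpha_0\neq 0$, Lemma~\ref{ds-formula} applied to $\langle\alpha_0,\beta_0\rangle$ and $\langle 0,1\rangle$ yields $\chi((\alpha_0,\beta_0),(0,1)) = |\alpha_0|/\sqrt{|\alpha_0|^2+|\beta_0|^2}$; squaring and dividing through by $|\alpha_0|^2$ produces $1/(1+|1/\lambda_0|^2)$, which equals $\cos^2\theta$ by the formula for $\cos\theta$ above. Part~(iii) then follows simply by multiplying the two displayed chordal distances from (i) and (ii): the product of $|\beta_0|$ and $|\alpha_0|$ over $|\alpha_0|^2+|\beta_0|^2$ becomes $|\lambda_0|/(1+|\lambda_0|^2)$ after dividing by $|\beta_0|^2$, and equals $\sin\theta\cos\theta$.

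There is essentially no obstacle here; the proof is a routine bookkeeping exercise, which is precisely why the authors omit it. The only point requiring a moment's care is the consistency of the representative chosen for $(\alpha_0,\beta_0)$: since both the chordal distance (by Lemma~\ref{ds-formula}) and the angle $\theta$ are independent of the choice of representative, one may verify the identities for any convenient representative, and in particular the factors $1/(1+|\lambda_0|^2)$ etc.\ are well defined because they depend only on $\lambda_0=\alpha_0/\beta_0$ (or are interpreted as limits when $\lambda_0\in\{0,\infty\}$). I would also note in passing that the relation $\chi\le\theta$ recorded after Definition~\ref{def-chord}, while not needed for the equalities, explains why the chordal-distance factors and the trigonometric factors agree asymptotically for small $\theta$.
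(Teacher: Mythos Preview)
Your proof is correct and follows exactly the approach the paper indicates: the authors omit the proof entirely, stating that it ``can be immediately obtained from the definition of chordal distance (Definition~\ref{def-chord}) and Remark~\ref{sintheta},'' which is precisely what you do via Lemma~\ref{ds-formula} and the formula for $\cos\theta$.
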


\begin{remark}
We notice that the conditions $0 \leq |\lambda_0| \leq 1$ and $|\lambda_0| \geq 1$ used in Corollaries \ref{interp1} and \ref{interp2} and in Remark \ref{kappainterp} are equivalent, respectively, to $\chi((\alpha_0, \beta_0), \allowbreak (1,0)) \geq 1/\sqrt{2}$ and $\chi((\alpha_0, \beta_0), (0,1)) \geq 1/\sqrt{2}$, or, in other words, to $sin(\theta) \geq 1/\sqrt{2}$ and $cos(\theta) \geq 1/ \sqrt{2}$.
\end{remark}

Combining Proposition \ref{geometric1} (iii) with Theorem \ref{ThKrtheta} (iii), we see that either when the angle between the lines $(\alpha_0,\beta_0)$ and $(1,0)$ is very small or when the angle between the lines $(\alpha_0, \beta_0)$ and $(0,1)$ is very small, $\kappa_{\theta}((\alpha_0,\beta_0),P)\ll\kappa_{r}(\lambda_0,P)$, i.e., even in the case $\kappa_{\theta}((\alpha_0,\beta_0),P)$ is moderate and the line $(\alpha_0,\beta_0)$ changes very little under perturbations, the quotient $\lambda_0=\alpha_0/\beta_0$ can change a lot in a relative sense. This is immediately understood geometrically in $\mathbb{R}^2$. The combination of the remaining parts of Theorem \ref{ThKrtheta} and Proposition \ref{geometric1} lead to analogous discussions. In the next section, we make an analysis of these facts from another perspective.

\subsection{Geometric interpretation in terms of the condition number of the  cotangent function}

Let $\alpha_0, \beta_0 \in \mathbb{C}$ with $\beta_0\neq 0$, let $\lambda_0:=\frac{\alpha_0}{\beta_0}$, and let $\theta:=\theta((\alpha_0, \beta_0), (1,0))$, that is, let $\theta$ denote the angle between the lines $(\alpha_0, \beta_0)$ and $(1,0)$. From Proposition \ref{geometric1},
$$cos \;\theta=  \frac{|\lambda_0|}{\sqrt{1+|\lambda_0|^2}}, \quad sin \; \theta =\frac{1}{\sqrt{1+|\lambda_0|^2}}.$$
Thus,
$$|\lambda_0| = \frac{|\alpha_0|}{|\beta_0|} = cotan\; \theta \quad \textrm{and} \quad  \frac{1}{|\lambda_0|} = tan \; \theta.$$

Note that this is also the standard definition of the cotangent and tangent functions in the first quadrant of $\mathbb{R}^2$. The cotangent function is differentiable in $(0, \pi/2)$. Thus, the absolute condition number\footnote{{When we refer to the \textit{absolute} condition number of a function of $\theta$, we mean that we are measuring the changes both in the function and in $\theta$ in an absolute sense.}} of this function is
\begin{equation}\label{kappaact}
\kappa_{a,ct}(\theta):=|cotan'(\theta)| = |1+cotan^2(\theta)| = 1 + |\lambda_0|^2,
\end{equation}
{which is huge when $\theta$ approaches zero.}
Moreover, the relative-absolute condition number\footnote{{When we mention the \textit{relative-absolute} condition number of a function of $\theta$, we consider that we are measuring the change in the function in a relative sense and the one in $\theta$ in an absolute sense.}} of the cotangent function is given by
\begin{equation}\label{kapparct}
\kappa_{r,ct}(\theta):= \frac{|cotan'(\theta)|}{|cotan(\theta)|}= \frac{|1+cotan^2(\theta)|}{|cotan(\theta)|}=\frac{1+|\lambda_0|^2}{|\lambda_0|},
\end{equation}
which is huge when $\theta$ approaches either zero or $\pi/2$.

The tangent function is  also differentiable in $(0,\pi/2)$ and the absolute condition number of this function is
\begin{equation}\label{kappaat}
\kappa_{a,t}(\theta):= |tan'(\theta)|=|1+tan^2(\theta)| = 1 + \left | \frac{1}{\lambda_0} \right|^2.
\end{equation}

From (\ref{abs-theta}), (\ref{abs-theta-rev}),  (\ref{rel-theta}), (\ref{kappaact}), (\ref{kapparct}), and (\ref{kappaat}), we obtain the following result.

\begin{theorem}\label{cotangent}
Let $(\alpha_0, \beta_0)\neq (0,0)$ be a simple eigenvalue of a regular matrix polynomial
$P(\alpha, \beta)= \sum_{i=0}^k \alpha^i \beta^{k-i} B_i$ of grade $k$ and let $\lambda_0:=\frac{\alpha_0}{\beta_0}$, where $\lambda_0=\infty$ if $\beta_0=0$. Let $\theta:=\theta((\alpha_0, \beta_0), (1,0)) $.
Assume that the same weights are considered in the {definitions of all the condition numbers appearing below}. Then,
\begin{itemize}
	\item [(i)] If $\beta_0\neq 0$, then
	\begin{equation}\label{abs-theta-nh}
	\kappa_a(\lambda_0, P) = \kappa_{\theta}((\alpha_0, \beta_0), P)\ \kappa_{a,ct}(\theta).
	\end{equation}
	\item[(ii)] If $\alpha_0\neq 0$, then
	\begin{equation*}\label{rev-theta-nh}
	\kappa_a \left (\frac{1}{\lambda_0}, rev P\right) =\kappa_{\theta}((\alpha_0, \beta_0), P)\ \kappa_{a,t}(\theta).
	\end{equation*}
	\item[(iii)] If $\alpha_0\neq 0$ and $\beta_0\neq 0$, then
	\begin{equation}\label{rel-theta-nh}
	\kappa_r(\lambda_0, P) = \kappa_{\theta}((\alpha_0, \beta_0), P)\ \kappa_{r,ct}(\theta).
	\end{equation}
\end{itemize} 
\end{theorem}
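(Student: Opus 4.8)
The plan is to obtain all three identities by purely algebraic substitution, feeding the exact relationships of Theorem~\ref{ThKrtheta} into the closed-form expressions \eqref{kappaact}, \eqref{kapparct} and \eqref{kappaat} for the condition numbers of the cotangent and tangent functions. No new analysis is needed: the whole argument is bookkeeping.

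For part~(i) I would assume $\beta_0\neq 0$, so that $\lambda_0$ is finite, and rewrite \eqref{abs-theta} as $\kappa_a(\lambda_0,P)=\kappa_{\theta}((\alpha_0,\beta_0),P)\,(1+|\lambda_0|^2)$. Then \eqref{kappaact} identifies the factor $1+|\lambda_0|^2$ with $\kappa_{a,ct}(\theta)$, which gives \eqref{abs-theta-nh}. Part~(iii) is handled the same way: when $\alpha_0\neq 0$ and $\beta_0\neq 0$ I would rearrange \eqref{rel-theta} to $\kappa_r(\lambda_0,P)=\kappa_{\theta}((\alpha_0,\beta_0),P)\,\frac{1+|\lambda_0|^2}{|\lambda_0|}$ and recognise the factor as $\kappa_{r,ct}(\theta)$ via \eqref{kapparct}, obtaining \eqref{rel-theta-nh}.

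Part~(ii) needs slightly more care because of the possible infinite eigenvalue. Assuming $\alpha_0\neq 0$ one has $\lambda_0\neq 0$, so $1/\lambda_0$ is finite, with the convention $1/\lambda_0=\beta_0/\alpha_0=0$ when $\beta_0=0$. If moreover $\beta_0\neq 0$, then $\theta\in(0,\pi/2)$, the tangent is differentiable at $\theta$, and rearranging \eqref{abs-theta-rev} together with \eqref{kappaat} gives the statement exactly as in the previous cases. For the endpoint $\beta_0=0$ (so that $(\alpha_0,\beta_0)=(1,0)$ and $\lambda_0=\infty$) I would argue directly: here $\theta=0$, $\tan\theta=0=1/\lambda_0$ and $\kappa_{a,t}(0)=1$, while the proof of Theorem~\ref{ThKrtheta} already records the identity $\kappa_a(0,rev P)=\kappa_{\theta}((1,0),P)$; multiplying the latter by $\kappa_{a,t}(0)=1$ yields the desired identity at this point too.

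The only real point requiring attention, and it is a mild one, is the handling of the degenerate eigenvalues $\lambda_0\in\{0,\infty\}$: one must check that the hypotheses $\beta_0\neq 0$ in~(i) and $\alpha_0\neq 0$ in~(ii) are exactly what guarantees that both sides of each identity are defined, and that the cotangent and tangent condition numbers --- differentiable only on the open interval $(0,\pi/2)$ --- either extend correctly to, or can be computed directly at, the angle $\theta$ associated with $(\alpha_0,\beta_0)$.
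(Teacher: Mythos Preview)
Your proposal is correct and follows essentially the same approach as the paper, which simply states that the theorem is obtained from \eqref{abs-theta}, \eqref{abs-theta-rev}, \eqref{rel-theta}, \eqref{kappaact}, \eqref{kapparct}, and \eqref{kappaat} without further detail. Your extra care in checking the endpoint cases $\theta\in\{0,\pi/2\}$ (where the paper defines $\kappa_{a,ct}$, $\kappa_{r,ct}$, $\kappa_{a,t}$ only on the open interval) goes slightly beyond what the paper spells out, but the argument is the same direct substitution.
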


{Since for lines $(\alpha_0,\beta_0)$ and $(\tilde\alpha_0,\tilde\beta_0)$ very close to each other,  $\chi((\alpha_0,\beta_0),(\tilde{\alpha_0},\tilde{\beta_0}))\approx\theta((\alpha_0,\beta_0),(\tilde{\alpha_0},\tilde{\beta_0}))=|\theta((\alpha_0,\beta_0),(1,0))-\theta((\tilde{\alpha_0},\tilde{\beta_0}),(1,0))|$, equations \eqref{abs-theta-nh} and \eqref{rel-theta-nh} express the non-homogeneous condition numbers of $\lambda_0$ as a combination of two effects: the change of the homogeneous eigenvalue measured by $\theta((\alpha_0,\beta_0),(\tilde{\alpha_0},\tilde{\beta_0}))$ as a consequence of perturbations in the coefficients of $P(\alpha,\beta)$ and the alteration that this change produces in $|cotan(\theta)|$, which depends only on the properties of $cotan(\theta)$ and not on $P(\lambda)$. In fact, with this idea in mind, Theorem \ref{cotangent} can also be obtained directly from the definitions of the involved condition numbers.}

We notice that the expressions in (\ref{abs-theta-nh}) and (\ref{rel-theta-nh}) can be interpreted as follows: Given a matrix polynomial $P(\lambda)$, the usual way to solve the polynomial eigenvalue problem is to use a linearization {$L(\lambda)=\lambda L_1-L_0$} of $P(\lambda)$. A standard algorithm to solve the generalized eigenvalue problem associated with $L(\lambda)$ is the QZ algorithm. This algorithm computes first the generalized Schur decomposition of $L_1$ and $L_0$, that is, these matrix coefficients are  factorized in the form $L_1=QSZ^*$ and $L_0=QTZ^*$, where $Q$ and $Z$ are unitary matrices and  $S$ and $T$ are upper-triangular matrices. The pairs {$(T_{ii}, S_{ii})$}, where $S_{ii}$ and $T_{ii}$ denote the main diagonal entries of  $S$ and $T$ in position $(i,i)$, respectively,  are the ``homogeneous'' eigenvalues of $L(\lambda)$ (and, therefore, of $P(\lambda)$). In order to obtain the {non-homogeneous} eigenvalues of $P(\lambda)$, one more step is necessary, namely, to divide {$T_{ii}/S_{ii}$}. The expressions in (\ref{abs-theta-nh}) and (\ref{rel-theta-nh}) say that, even if {$\kappa_{\theta}((T_{ii},S_{ii}),P)$ is moderate and} the pair {$(T_{ii}, S_{ii})$} is ``{accurately} computed'',  the  quotient {$\lambda_i:=T_{ii}/S_{ii}$} may be ``{inaccurately computed"} when {$S_{ii}$} is very close to zero (that is, when $|\lambda_i|$ is very large) or when {$T_{ii}$} is close to zero (that is, when $|\lambda_i|$ is close to zero) {since $|\lambda_i|$ will have a huge non-homogeneous condition number. More precisely, for} the large eigenvalues, both {$\kappa_{a}(\lambda_0, P)$} and $\kappa_{r}(\lambda_0, P)$ will be much larger than $\kappa_{\theta}((\alpha_0, \beta_0), P)$, and for the small eigenvalues, $\kappa_{r}(\lambda_0, P)$ will be much larger than $\kappa_{\theta}((\alpha_0, \beta_0), P)$. This observation brings up the question of {the} computability of small and large eigenvalues.

\subsection{Computability of small and large eigenvalues {of matrix polynomials}}

By Remark \ref{kappainterp},  if $|\lambda_0|$ is very large, we have
\begin{equation}
 \kappa_{\theta}((\lambda_0, 1), P) \ll \kappa_r(\lambda_0, P) \ll \kappa_a(\lambda_0, P),
 \end{equation}
and, if  $|\lambda_0|$ is very close to 0, then
\begin{equation}
 \kappa_{\theta}((\lambda_0, 1), P)\approx \kappa_{a}(\lambda_0, P) \ll \kappa_r(\lambda_0, P).
 \end{equation}
The question that we study in this section is whether or not the {absolute non-homogeneous} condition number $\kappa_a(\lambda_0, P) $ is always very large when $|\lambda_0|$ is very large, or the {relative non-homogeneous} condition number $\kappa_r(\lambda_0, P)$ is always very large when either $|\lambda_0|$ is very close to 0 or is very large. If this was the case, then these types of {non-homogeneous} eigenvalues {would be always very ill-conditioned and would be computed with such huge errors by the available algorithms that it could be simply said that they are not  computable.}

We focus our answer to this question on the behavior of the eigenvalues of pencils (that is, we focus on matrix polynomials  $P(\lambda)$  of grade 1). This is reasonable since, when computing the eigenvalues of a matrix polynomial, the most common approach is to use a linearization. In order to keep in mind that we are not working with general matrix polynomials, we will use the notation $L(\lambda)$ to denote a pencil instead of $P(\lambda)$. {Moreover, we will focus on eigenvalue condition numbers with weights $\omega_i$ corresponding to the backward errors of current algorithms for generalized eigenvalue problems.


Let $L(\lambda):=\lambda B_1+B_0$ be a {regular} pencil and let $\lambda_0$ be a finite,  simple eigenvalue of $L(\lambda)$.
Notice that,  by Theorem \ref{form-kappatheta},
\begin{align}\label{inequality}
\kappa_{\theta}((\lambda_0,1),L)&=\frac{(\omega_0+|\lambda_0|\omega_1)\|y\|_2\|x\|_2}{|y^*(B_1-\overline{\lambda_0}B_0)x|}
\geq\frac{(\omega_0+|\lambda_0|\omega_1)}{\|B_1\|_2+|\lambda_0|\|B_0\|_2}.
\end{align}



The following result is an immediate consequence of the previous inequality and shows some important cases in which  the  eigenvalues of a pencil $L(\lambda)$ with large or small modulus are not computable.


\begin{proposition}\label{comput}
Let $L(\lambda)=\lambda B_1+B_0$ be a {regular} pencil and let $\lambda_0$ be a finite, simple eigenvalue of $P(\lambda)$. Let $\omega_1, \omega_0$ be the weights used in the definition of the {non-homogeneous} condition number of $\lambda_0$.  Then,
$$\kappa_{a}(\lambda_0, L) \geq 1+|\lambda_0|^2 \quad \textrm{and} \quad  \kappa_{r}(\lambda_0, L) \geq \frac{1+|\lambda_0|^2}{|\lambda_0|}, $$
(where the second inequality holds only if $\lambda_0 \neq 0$) if any of the following {conditions} holds:
\begin{enumerate}
\item $\omega_0= \omega_1=\max\{\|B_1\|_2, \|B_0\|_2\}$;
\item $\omega_i =\|B_i\|_2$ for $i=0,1$, $|\lambda_0| < 1$ and $\|B_1\|_2 \leq \|B_0\|_2$;
\item $\omega_i =\|B_i\|_2$ for $i=0,1$, $|\lambda_0| > 1$ and $\|B_0\|_2 \leq \|B_1\|_2$;
\item $\omega_i =\|B_i\|_2$ for $i=0,1$ and  $\|B_1\|_2$ and $\|B_0\|_2$ are similar.
\end{enumerate}
\end{proposition}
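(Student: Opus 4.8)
The plan is to derive all four cases from the single inequality \eqref{inequality}, namely
$$\kappa_{\theta}((\lambda_0,1),L)\geq\frac{\omega_0+|\lambda_0|\,\omega_1}{\|B_1\|_2+|\lambda_0|\,\|B_0\|_2},$$
together with Theorem \ref{ThKrtheta}, which gives $\kappa_a(\lambda_0,L)=(1+|\lambda_0|^2)\,\kappa_{\theta}((\lambda_0,1),L)$ and, for $\lambda_0\neq0$, $\kappa_r(\lambda_0,L)=\frac{1+|\lambda_0|^2}{|\lambda_0|}\,\kappa_{\theta}((\lambda_0,1),L)$. Thus it suffices to show that under each of the four hypotheses the quotient $\frac{\omega_0+|\lambda_0|\,\omega_1}{\|B_1\|_2+|\lambda_0|\,\|B_0\|_2}$ is at least $1$; both claimed inequalities then follow simultaneously by multiplying by $1+|\lambda_0|^2$ and by $\frac{1+|\lambda_0|^2}{|\lambda_0|}$ respectively.

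First I would handle case 1. With $\omega_0=\omega_1=M:=\max\{\|B_1\|_2,\|B_0\|_2\}$ the numerator is $M(1+|\lambda_0|)$ while the denominator is at most $M+|\lambda_0|M=M(1+|\lambda_0|)$, so the quotient is $\geq1$. For case 2, set $\omega_i=\|B_i\|_2$; the quotient is $\frac{\|B_0\|_2+|\lambda_0|\,\|B_1\|_2}{\|B_1\|_2+|\lambda_0|\,\|B_0\|_2}$. The assumptions $|\lambda_0|<1$ and $\|B_1\|_2\leq\|B_0\|_2$ give $\|B_1\|_2+|\lambda_0|\,\|B_0\|_2\leq\|B_0\|_2+|\lambda_0|\,\|B_1\|_2$ — indeed, moving terms, this is $(1-|\lambda_0|)(\|B_1\|_2-\|B_0\|_2)\leq0$, which holds. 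Case 3 is symmetric: $|\lambda_0|>1$ and $\|B_0\|_2\leq\|B_1\|_2$ give $(|\lambda_0|-1)(\|B_0\|_2-\|B_1\|_2)\leq0$, i.e. again the denominator does not exceed the numerator. For case 4, when $\|B_1\|_2$ and $\|B_0\|_2$ are (approximately) equal, say both equal to $b$, the quotient is $\frac{b(1+|\lambda_0|)}{b(1+|\lambda_0|)}=1$, with an appropriate ``$\approx$'' if one only assumes they are similar; this case is really a limiting instance of cases 2 and 3 and of case 1.

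There is no serious obstacle here: every case reduces to an elementary two-term inequality of the form $(1\mp|\lambda_0|)(\|B_1\|_2-\|B_0\|_2)\leq0$, whose sign is controlled precisely by the stated hypothesis on $|\lambda_0|$ versus $1$ and on the ordering of $\|B_0\|_2,\|B_1\|_2$. The only point deserving a word of care is case 4, where ``similar'' is informal; there one either states the bound with ``$\gtrsim$'' or, to keep the statement rigorous, notes that if $\|B_0\|_2/\|B_1\|_2\in[c,1/c]$ for some $c\le1$ then the quotient is at least $c$, so the condition numbers are at least $c(1+|\lambda_0|^2)$ and $c\frac{1+|\lambda_0|^2}{|\lambda_0|}$, which is still huge for extreme $|\lambda_0|$. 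In all cases the conclusion that $\kappa_a(\lambda_0,L)\geq1+|\lambda_0|^2$ and $\kappa_r(\lambda_0,L)\geq\frac{1+|\lambda_0|^2}{|\lambda_0|}$ then follows by the two multiplications described above, completing the proof.
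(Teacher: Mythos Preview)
Your proof is correct and follows essentially the same route as the paper: both reduce to showing $\kappa_\theta((\lambda_0,1),L)\geq 1$ via inequality \eqref{inequality} and then invoke Theorem \ref{ThKrtheta}. The only minor difference is that for case 3 the paper passes to $rev\,L$ and $1/\lambda_0$ via Lemma \ref{kappa-a-r}, whereas you verify the symmetric inequality $(|\lambda_0|-1)(\|B_0\|_2-\|B_1\|_2)\leq 0$ directly, which is arguably cleaner.
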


\begin{proof}
First assume that $\omega_1=\omega_0= \max\{\|B_0\|_2, \|B_1\|_2\}$. From (\ref{inequality}), we get
$$\kappa_{\theta}((\lambda_0,1),L) \geq 1,$$
which implies the result by Theorem {\ref{ThKrtheta}}.
Assume now that $\omega_i =\|B_i\|_2$ for $i=0,1$. {If} $|\lambda_0| <1$ and $\|B_1\|_2 \leq \|B_0\|_2$, then
$$(1-|\lambda_0|) \|B_1\|_2 \leq (1-|\lambda_0|) \|B_0\|_2,$$
or equivalently,
$$\frac{\|B_0\|_2 + |\lambda_0| \|B_1\|_2}{\|B_1\|_2+|\lambda_0|\|B_0\|_2} \geq 1$$
and the result follows from (\ref{inequality}), which implies $\kappa_{\theta}((\lambda_0,1),L)\geq 1$, and Theorem {\ref{ThKrtheta}}.

The proof of the result assuming 3. follows from applying 2. to $rev L$ {and $\frac{1}{|\lambda_0|}$ and taking into account Lemma \ref{kappa-a-r}}.
Finally, assume that $\|B_1\|_2 \approx \|B_0\|_2$. Then, the result follows from 2. and 3.
\end{proof}
\begin{remark}We admit a certain degree of ambiguity in the meaning of ``$\|B_1\|_2$ and $\|B_0\|_2$ are similar'' in the fourth set of assumptions in Proposition \ref{comput}. It is possible to make a quantitative assumption of the type $\frac{1}{C}\leq\frac{\|B_1\|_2}{\|B_0\|_2}\leq C$ for some constant $C>1$, which would more laboriously lead to more complicated lower bounds for $\kappa_a(\lambda_0,L)$ and $\kappa_r(\lambda_0,L)$ which decrease as $C$ increases. We have preferred the simpler but somewhat ambiguous statement in Proposition \ref{comput}.
\end{remark}

Taking into account Proposition \ref{comput}, the only case left to study {of the potential non-computability of eigenvalues with small and large absolute values} is when $|\lambda_0| \ll 1$  and $\|B_0\|_2 \ll \|B_1\|_2.$ The case in which $|\lambda_0| \gg 1$ and $\|B_0\|_2 \gg \|B_1\|_2$ follows {by} using the reversal of $L(\lambda)$ and the fact that
$$\kappa_r(\lambda_0, L) = \kappa_r\left (\frac{1}{\lambda_0}, rev L \right).$$
Notice that when $\|B_0\|_2 \ll \|B_1\|_2$, in practice, $L(\lambda)$  is essentially  $\lambda B_1$, and since $L(\lambda)$ is regular, $B_1$ {is generically} nonsingular, which implies that, {when $\frac{\|B_0\|_2}{\|B_1\|_2}$ approaches 0}, all the eigenvalues of $L(\lambda)$ are very close to 0, {independently of the specific matrices $B_0$ and $B_1$. Thus, in} this trivial case, we should expect that the eigenvalues are all well-conditioned. Next we show an example that illustrates this observation.

\begin{example}
	\label{examp}
Let $\epsilon <1$. Consider the regular pencil
$$L(\lambda)= \lambda B_1 + B_0:=  \lambda\left[\begin{array}{cc}
1/\epsilon & 1/\epsilon\\
1/\epsilon & 1
\end{array}\right]+
\left[\begin{array}{cc}
\epsilon & 1\\
1 & 1
\end{array}\right].
$$

We note that the {standard condition numbers for inversion, i.e., $\|B_i\|_2\|B_i^{-1}\|_2$,} of the matrix coefficients $B_0$ and $B_1$ are {bounded} by
$$\frac{1}{1-\epsilon}\leq cond(B_0)=cond(B_1)\leq\frac{4}{1-\epsilon}$$
and
$$ \ \frac{2}{\sqrt{2}}\leq\|B_0\|_2\leq2,\quad \frac{2}{\sqrt{2}\epsilon}\leq\|B_1\|_2\leq\frac{2}{\epsilon}.$$
Thus, as $\epsilon$ approaches 0, $cond(B_0)= cond(B_1) \approx 1$ and $\|B_0\|_2 \ll \|B_1\|_2$.
Moreover, the eigenvalues of {$L(\lambda)$}, given by
$$\lambda_0=\frac{\epsilon-\epsilon^3+\sqrt{\epsilon^6-6\epsilon^4+8\epsilon^3-3\epsilon^2}}{2\epsilon-2}, \quad \lambda_1=\frac{\epsilon-\epsilon^3-\sqrt{\epsilon^6-6\epsilon^4+8\epsilon^3-3\epsilon^2}}{2\epsilon-2}$$
are both of order $\epsilon$ as $\epsilon \to 0$. Additionally, {the right (resp. left) eigenvectors of $L(\lambda)$ associated with $\lambda_0$ and $\lambda_1$ are} denoted, respectively, by $x_{\lambda_0}$ and $x_{\lambda_1}$ (resp. $y_{\lambda_0}$ and $y_{\lambda_1}$)  are given by
$$x_{\lambda_0}=\overline{y_{\lambda_0}}=\left(1, \frac{-\lambda_0-\epsilon^2}{\lambda_0+\epsilon}\right)^T, \ x_{\lambda_1}=\overline{y_{\lambda_1}}=\left(1, \frac{-\lambda_1-\epsilon^2}{\lambda_1+\epsilon}\right)^T.$$
{Observe} that, as $\epsilon \to 0$, we have
$\|x_{\lambda_0}\|_2=\|y_{\lambda_0}\|_2\approx \|y_{\lambda_1}\|_2 = \|x_{\lambda_1}\|_2 \approx \sqrt{2} \, .
$
{Moreover, we} have
\begin{align*}\kappa_{\theta}((\lambda_0,1),L)
&=\frac{(|\lambda_0|\|B_1\|_2+\|B_0\|_2)\|y_{\lambda_0}\|_2\|x_{\lambda_0}\|_2}{|y_{\lambda_0}^*( B_1-\overline{\lambda_0} B_0)x_{\lambda_0}|}.
\end{align*}
Some computations show that
\begin{align*}
y_{\lambda_0}^*( B_1-\overline{\lambda_0} B_0&)x_{\lambda_0}=\frac{1}{\epsilon} - \overline{\lambda_0} \epsilon+ \left( \frac{1}{\epsilon}+ \overline{\lambda_0} \epsilon\right)\left ( \frac{-\lambda_0-\epsilon^2}{\lambda_0+\epsilon} \right) \\+
&\left[ \frac{1}{\epsilon} - \overline{\lambda_0} + \frac{(-\lambda_0 - \epsilon^2)(1+\overline{\lambda_0})}{\lambda_0+\epsilon}\right] \left(\frac{-\epsilon - \epsilon^2}{2\epsilon}\right).
\end{align*}
Combining all this information, we deduce that $\kappa_{\theta}((\lambda_0,1), L) =O( \epsilon)$ as $\epsilon \to 0$, noticing that the numerator tends to {a constant} while the denominator is $O(1/\epsilon)$.
Then, from Theorem \ref{ThKrtheta} and from $\frac{1+|\lambda_0|^2}{|\lambda_0|}=O(\frac{1}{\epsilon})$, we have that $\kappa_r(\lambda_0, L) = O(1)$. It is clear that a similar conclusion can be obtained for {$\kappa_r(\lambda_1, L)$}.

 In the {Figure} \ref{fig}, we plot the values of $|\lambda_0|$, {$\kappa_{\theta}((\lambda_0,1),L)$} and $\kappa_r(\lambda_0,L)$ against $\epsilon$. We can observe that, as $\epsilon\rightarrow 0$, both $|\lambda_0|$ and {$\kappa_{\theta}((\lambda_0,1),L)$} tend to 0 at the same rate {but that $\kappa_r(\lambda_0,L)$ remains essentially constant and close to 1}. A similar graph can be obtained for $\lambda_1$.

\begin{figure}
	\centering
	\includegraphics[width=9.3cm, height=7cm]{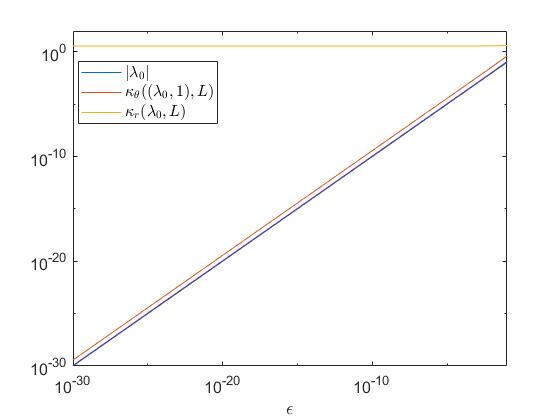}
	\caption{Plot of $|\lambda_0|$, $\kappa_{\theta}((\lambda_0,1),L)$ and $\kappa_r(\lambda_0,L)$ against $\epsilon$ in Example \ref{examp}.}
	\label{fig}
\end{figure}

\end{example}

In summary, we have proven that, as $\epsilon \to 0$, the pencil $L(\lambda)=\lambda B_1+B_0$ {in Example \ref{examp}} has well-conditioned matrix coefficients with very different norms, it has two eigenvalues close to 0 and they are both computable, since {their} {non-homogeneous relative} condition numbers {are} approximately 1. {It} is easy to check that {$rev L$} is an example of a pencil that has  computable very large eigenvalues. {As commented before, these pencils illustrate the only situation in which non-homogeneous eigenvalues with very small or large modulus can be accurately computed, which corresponds to pencils with highly unbalanced coefficients.}


\section{Conclusions}
\label{secfinal}
We have gathered together the definitions of (non-homogeneous and homogeneous) eigenvalue condition numbers of matrix polynomials that were scattered in the literature. We have also derived for the first time an exact formula to compute one of these condition numbers (the homogeneous condition number that is based on the chordal distance, also called Stewart-Sun condition number). On the one hand, we have determined that the two homogeneous condition numbers studied in this paper {differ at most by a factor $\sqrt{k+1}$, where $k$ is the grade of the polynomial, and so are essentially equal in practice. Since the definition of the homogeneous condition number based on the chordal distance is considerably simpler, we believe that its use should be preferred among the homogeneous condition numbers}. On the other hand, we have proved exact relationships between each of the non-homogeneous condition numbers and the homogeneous condition number based on the chordal distance. This result would allow us to extend results that have been proved for the non-homogeneous condition numbers to the homogeneous condition numbers (and vice versa). Besides, we have provided geometric interpretations of the factor that appears in these exact relationships, which explain transparently when and why the non-homogeneous condition numbers are much larger than the homogeneous ones. Finally, we have used these relationships to analyze for which cases the large and small non-homogeneous eigenvalues of {a} matrix polynomial are computable {with some accuracy and we have seen that this is only possible in some very particular situations}.

\end{document}